\documentclass[10pt]{article}
\textwidth= 5.00in
\textheight= 7.4in
\topmargin = 30pt
\evensidemargin=0pt
\oddsidemargin=55pt
\headsep=17pt
\parskip=.5pt
\parindent=12pt
\font\smallit=cmti10

\usepackage{amssymb,latexsym,amsmath,epsfig,amsthm} 

\usepackage{geometry}
\usepackage{amsfonts}
\usepackage{colonequals}
\usepackage{stmaryrd}
\usepackage{tabularx}
\usepackage{xcolor}
\usepackage{float}
\usepackage{graphicx}
\usepackage[percent]{overpic}

\makeatletter

\renewcommand\section{\@startsection {section}{1}{\z@}
{-30pt \@plus -1ex \@minus -.2ex}
{2.3ex \@plus.2ex}
{\normalfont\normalsize\bfseries\boldmath}}

\renewcommand\subsection{\@startsection{subsection}{2}{\z@}
{-3.25ex\@plus -1ex \@minus -.2ex}
{1.5ex \@plus .2ex}
{\normalfont\normalsize\bfseries\boldmath}}

\renewcommand{\@seccntformat}[1]{\csname the#1\endcsname. }

\makeatother

\newtheorem{theorem}{Theorem}
\newtheorem{lemma}{Lemma}

\newtheorem{proposition}{Proposition}
\newtheorem{corollary}{Corollary}

\theoremstyle{definition}

\newtheorem{definition}[theorem]{Definition}

\newcommand{\ignore}[1]{}

\newcommand{\set}[1]{\left\{#1\right\}}
\newcommand{\setl}[1]{\left\{#1\right.\mathclose{}\kern-\nulldelimiterspace}
\newcommand{\setr}[1]{\kern-\nulldelimiterspace\mathopen{}\left.#1\right\}}

\newcommand{\fa}{\forall\:}
\newcommand{\ce}{\:\colonequals\:}

\newcommand{\abs}[1]{\left\lvert#1\right\rvert}
\newcommand{\norm}[1]{\left\Vert#1\right\Vert}
\newcommand{\floor}[1]{\left\lfloor#1\right\rfloor}

\newcommand{\fracpart}[1]{\left\{#1\right\}}

\newcommand{\N}{\mathbb{N}}
\newcommand{\Z}{\mathbb{Z}}

\newcommand{\R}{\mathbb{R}}
\newcommand{\C}{\mathbb{C}}
\newcommand{\Nz}{{\mathbb{N}}}

\renewcommand{\vector}[1]{{\mathbf{#1}}}

\newcommand{\D}[1]{{\mathcal{D}_{#1}}}
\newcommand{\Dstar}[1]{{\mathcal{D}_{#1}^{(\ast)}}}

\begin{document}

\begin{center}
\uppercase{\bf The finiteness property for shift radix systems with general parameters}
\vskip 20pt
{\bf Attila~Peth\H{o}\footnote{The author is supported in part by the OTKA grants NK104208, NK115479 and the FWF grant P27050. He also thanks the hospitality of the Montanuniversit\"at Leoben and the Technische Universit\"at Graz.}}\\
{\smallit Faculty of Informatics, University of Debrecen, Kassai \'{u}t 26, 4028 Debrecen, HUNGARY and University of Ostrava, Faculty of Science, Dvo\v{r}\'{a}kova 7, 70103 Ostrava, CZECH REPUBLIC}\\
{\tt petho.attila@inf.unideb.hu}\\
\vskip 10pt
{\bf J\"{o}rg~Thuswaldner\footnote{The author is supported by grant P29910 ``Dynamics, Geometry, and Arithmetic of Numeration'' of the Austrian Science Fund (FWF).}}\\
{\smallit Chair of Mathematics and Statistics,
University of Leoben, Franz Josef-Stra\ss{}e 18, 8700 Leoben, AUSTRIA}\\
{\tt joerg.thuswaldner@unileoben.ac.at}\\
\vskip 10pt
{\bf Mario~Weitzer\footnote{The author is supported by the grant P30205 ``Arithmetic Dynamical Systems, Polynomials, and Polytopes'' of the Austrian Science Fund (FWF).}}\\
{\smallit Institute of Analysis and Number Theory,
Graz University of Technology, Steyrergasse 30, 8010 Graz, AUSTRIA}\\
{\tt weitzer@math.tugraz.at}\\
\end{center}
\vskip 20pt

\centerline{%
} 
\vskip 30pt

\centerline{\bf Abstract}

\noindent
There are two-dimensional expanding shift radix systems (SRS) which have some periodic orbits. The aim of the present paper is to describe such unusual points as well as possible. We give all regions that contain parameters the corresponding SRS of which generate obvious cycles like $(1), (-1), (1,-1), (1,0), (-1,0)$. We prove that if $\mathbf{r}=(r_0,r_1)\in \R^2$ neither belongs to the aforementioned regions nor to the finite region $1\le r_0\le 4/3, -r_0 \le r_1 <r_0-1$, then $\tau_{\mathbf{r}}$ only has the trivial bounded orbit $\mathbf{0}$, which is a natural generalization of the established finiteness property for SRS with non-periodic orbits. The further reduction should be quite involving, because for all $1\le r_0< 4/3$ there exists at least one interval $I$ such that for the point $(r_0,r_1)$ this is not true whenever $r_1\in I$.

\pagestyle{myheadings}
\markright{%
}
\thispagestyle{empty}
\baselineskip=12.875pt
\vskip 30pt 

\section{Introduction}

The aim of this paper is to study properties of orbits of so-called \emph{shift radix systems}. These objects were introduced in 2005 by Akiyama et al.~\cite{Akiyama-Borbely-Brunotte-Pethoe-Thuswaldner:05}. We start by recalling their exact definition (for $x\in\mathbb{R}$ we use the notation $\lfloor x\rfloor$ and $\{x\}$ for its integer and fractional part, respectively).

\begin{definition}
For $d\in\N$ and $\vector{r}\in\R^d$ we call the mapping
\begin{align*}
\tau_\vector{r}:\Z^d&\to\Z^d,\\
\vector{a}=(a_0,\ldots,a_{d-1})&\mapsto(a_1,\ldots,a_{d-1},-\floor{\vector{r}\vector{a}})
\end{align*}
the \emph{$d$-dimensional shift radix system (SRS) associated with $\vector{r}$}.
\end{definition}

It is easy to see from this definition that  $\tau_\mathbf{r}$ is almost linear in the sense that it can be written as
\[
\tau_\mathbf{r}(\vector{a}) = R(\mathbf{r})\vector{a} + (0,\ldots,0,\{\mathbf{r}\mathbf{a}\})^t, \quad\hbox{where}\quad R(\mathbf{r})=\begin{pmatrix} \mathbf{0} & I_{d-1} \\ -r_0 & -r_1\; \cdots \; -r_d\end{pmatrix}
\]
with $I_{d-1}$ being the $(d-1)\times (d-1)$ identity matrix. However, the small deviation from linearity entails a rich dynamical behavior of $\tau_\mathbf{r}$ that has already been studied extensively in the literature. For a survey of different aspects of shift radix systems, we refer to \cite{KT:14}.

For $\vector{a}\in\Z^d$ the orbit of $\vector{a}$ under $\tau_\vector{r}$ is given by the sequence $(\tau_\vector{r}^n(\vector{a}))$, where $\tau_\vector{r}^n(\vector{a})$ stands for the $n$-fold application of $\tau_{\vector{r}}$ to $\vector{a}$. Given the definition of $\tau_\vector{r}$, the last $d-1$ entries of $\tau_\vector{r}^n(\vector{a})$ and the first $d-1$ entries of $\tau_\vector{r}^{n+1}(\vector{a})$ coincide for all $n\in\Nz$. Hence, we may choose to drop the redundant information and identify the orbit of $\vector{a}$ with the sequence of integers consisting of the entries of $\vector{a}$ followed only by the last entries of $\tau_\vector{r}^n(\vector{a})$. In other words, if $\vector{a}=(a_0,\ldots,a_{d-1})$ and $a_{d-1+n}$ is the last entry of $\tau_\vector{r}^n(\vector{a})$ for $n\in\N$, we identify the orbit of $\vector{a}$ with the sequence $(a_n)_{n\in\Nz}$. If $(a_n)$ ultimately consists only of zeroes, we call it a \emph{trivial} orbit of $\tau_\mathbf{r}$; otherwise, an orbit of $\tau_\mathbf{r}$ is called \emph{nontrivial}. An orbit $(a_n)$ is \emph{ultimately periodic} if there exist $n_0,p\in\N$, $p>0$, such that $a_{n+p}=a_{n}$ for $n \ge n_0$, and \emph{periodic} if this holds for $n_0=0$. In this case, we call $(a_n,\ldots, a_{n+p-1})$ with $n\ge n_0$ a \emph{cycle} of $\tau_\mathbf{r}$. The cycle $(0)$ is called \emph{trivial}, all other cycles are \emph{nontrivial}. The integer $p$ is called the \emph{period} of the cycle.

The properties of the orbits of $\tau_\vector{r}$ were studied extensively in the literature; see e.g. \cite{Akiyamaetal.2006b,Kirschenhoferetal.2010,Weitzer2015a}. In particular we define the sets
\[
\begin{split}
\mathcal{D}_d &:= \{\mathbf{r} \in \mathbb{R}^d \;:\; \hbox{each orbit of }\tau_\mathbf{r} \hbox{ ends up in a cycle} \}, \\
\mathcal{D}_d^{(0)} &:= \{\mathbf{r} \in \mathbb{R}^d \;:\; \hbox{each orbit of }\tau_\mathbf{r} \hbox{ ends up in the trivial cycle} \}.
\end{split}
\]

Elements of $\mathcal{D}_d^{(0)}$ are said to have the \emph{finiteness property}. As it has already been observed in \cite{Akiyama-Borbely-Brunotte-Pethoe-Thuswaldner:05}, for each $d\in\mathbb{N}$ both of these sets are contained in the closure of the so-called Schur-Cohn region (see~\cite{Schur:18})
\[
\mathcal{E}_d := \{(r_0,\ldots,r_{d-1}) \in \mathbb{R}^d \;:\;  \hbox{each root }  y \hbox{ of }x^d+r_{d-1}x^{d-1}+\cdots+r_0 \hbox{ satisfies } |y|<1 \}.
\]
In other words, the regions $\mathcal{D}_d$ and $\mathcal{D}_d^{(0)}$ only concern parameters corresponding to \emph{contractive} polynomials. In all these cases, the linear part $R(\mathbf{r})$ of $\tau_\vector{r}$ is contractive. Interesting results were also proved in the indifferent case, i.e., when all roots of the characteristic polynomial of $R(\mathbf{r})$ are on the unit circle (see \cite{Akiyama-Brunotte-Pethoe-Steiner:06,Akiyama-Brunotte-Pethoe-Steiner:07,Akiyama-Pethoe-:12, Kirschenhoferetal.2010, Kirschenhoferetal.2008}). Indeed, this is the difficult part of the description of the sets $\mathcal{D}_d$.

In this paper we focus our attention to the case for which $\tau_\vector{r}$ is expanding. The question we want to answer is the following: {\it for which values of $\vector{r}$ is the only cycle of $\tau_\vector{r}$ the trivial one?} In other words, we are going to study the set
\[
\mathcal{D}_d^{(*)} := \{\mathbf{r} \in \mathbb{R}^d \;:\;  \hbox{each ultimately periodic orbit of }\tau_\mathbf{r} \hbox{ ends up in the trivial cycle} \}.
\]
Indeed, it is clear that $\mathcal{D}_d^{(0)}\subset \mathcal{D}_d^{(*)}$. However, the reverse inclusion is not true since every $\tau_\mathbf{r}$ has unbounded orbits if $\mathbf{r}$ lies outside the closure of $\mathcal{E}_d$.

\begin{figure}[H]
\centering
\begin{overpic}[width=0.647\textwidth]{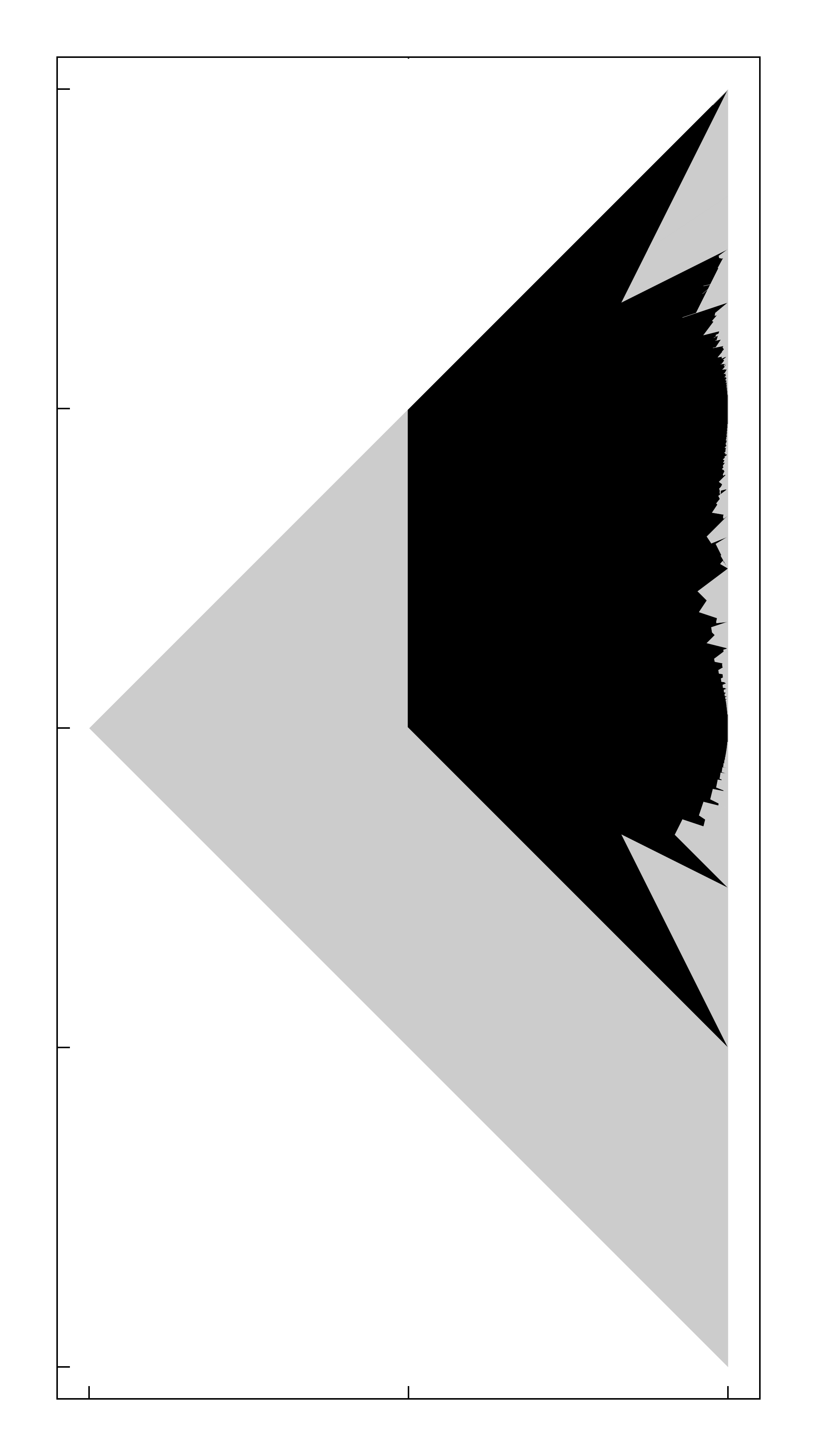}
\put(1.0,93.10){$2$}
\put(1.0,71.10){$1$}
\put(1.0,49.10){$0$}
\put(-1.05,27.10){$-1$}
\put(-1.05,5.10){$-2$}
\put(49.5,1.00){$1$}
\put(27.5,1.00){$0$}
\put(3.45,1.00){$-1$}
\end{overpic}
\caption{$\mathcal{D}_2^{(0)}$ (black) in $\mathcal{D}_2$ (gray).}
\label{fig:overviewD_2}
\end{figure}

Our aim is to describe the sets $\mathcal{D}_d^{(*)} $. We are only dealing with the case $d=2$. Our investigations show that a complete description of $\mathcal{D}_d^{(*)} $ is already very hard (and even seems to be beyond reach) already for $d=2$.

If we define the sequence $(e_n)\in[0,1)^\Nz$ by $e_n\ce\fracpart{\vector{r}\tau_\vector{r}^n(\vector{a})}$, where $\fracpart{x}=x-\floor{x}$ denotes the fractional part of $x\in\R$, then it follows from the definitions that
\begin{align} \label{Eqnlrs}
a_{n+d}+r_{d-1}a_{n+d-1}+\cdots+r_0a_n=e_n\in[0,1)
\end{align}
for all $n\in\Nz$, i.e., $(a_n)$ is a nearly linear recursive sequence (from now on we will simply write \emph{nlrs} for a nearly linear recursive sequence).
In order to compute the sequence $(a_n)$, we repeatedly apply $\tau_\mathbf{r}$ to $(a_0,\dots,a_{d-1})$, where $\mathbf{r}=(r_0,\dots,r_{d-1})$ is a fixed vector of real numbers. We then study the properties of those sequences. Akiyama, Evertse and Peth\H{o} \cite{Evertse} considered nlrs from a different point of view. They called a sequence of complex numbers $(a_n)$ {\it nearly linear recursive} if there exist $p_0,\dots,p_{d-1}\in \C$ such that the "error sequence" $(e_n)$, defined by
$$
a_{n+d} +p_{d-1} a_{n+d-1} + \dots + p_0a_n = e_n,
$$
is bounded. Note that $(a_n)$ is a given in their setting. For a given nlrs $(a_n)$, the set of polynomials $B_tx +B_{t-1}x^{t-1}+\dots+B_0 \in \C[x]$, where the sequence $(\sum_{i=0}^{t} B_ia_{n+i})$ is bounded, is an ideal of the polynomial ring $\C[x]$, which is called the {\it ideal of $(a_n)$}. As $\C[x]$ is a principal ideal ring for all nlrs $(a_n)$, there exists a unique polynomial which generates the ideal of $(a_n)$. This is called the {\it characteristic polynomial} of $(a_n)$. The authors proved that all roots of the characteristic polynomial of an nlrs have an absolute value of at least one. They also proved that if one of the roots of the characteristic polynomial of $(a_n)$ lies outside the unit disc, then $(|a_n|)$ tends to grow exponentially.

If the mapping $\tau_\vector{r}$ is expanding, the polynomial $x^d+r_{d-1}x^{d-1}+\cdots+r_0$ has a root outside the unit disc. However, even if all of its roots lie outside the unit disc, it can happen that a bounded sequence of integers $(a_n)$ satisfies \eqref{Eqnlrs}. This happens for example in the case $d=2,p_1=-1.15, p_0=1.1$, when both roots of $x^2-1.15 x +1.1$ are larger than $1$, but the constant sequence $(1)$ satisfies \eqref{Eqnlrs}. It is easy to resolve this apparent contradiction: the characteristic polynomial of $(1)$ is, in the sense of \cite{Evertse}, not $x^2-1.15 x +1.1$, but the constant polynomial $1$.

In Section 2 we present preparatory results about bounded nlrs. SRSs are special cases of nlrs, but have particular features too. We collect them in Section 3, first in the general case, then specific to the case $d=2$. Section 4, which includes the characterization of $\mathcal{D}_2^{(*)}$, is divided into three subsections. First, we describe regions which do not belong to $\mathcal{D}_2^{(*)}$ because they have obvious cycles like $(1), (-1), (1,-1), (1,0), (-1,0)$. Next we prove that large regions belong to $\mathcal{D}_2^{(*)}$. Initially, we consider such regions for which the proof is simple. We can exclude the existence of cycles by proving that the orbits are monotonically increasing or decreasing. This always happens if the roots of $x^2+r_1x+r_0$ are real and at least one of them is positive. The hard cases are studied in Subsections 4.3 and 4.4. We are able to reduce the uncertain region to a bounded one by using estimates of the size of elements of a cycle, depending on the size of the roots of $x^2+r_1x+r_0$. Finally, we further reduce the uncertain region to $1\le r_0\le 4/3, -r_0 \le r_1 <r_0-1$ by using a Brunotte-type algorithm. The further reduction should be involving because, for all $1\le r_0< 4/3$, there exists at least one interval $I$ such that $(r_0,r_1)\notin \mathcal{D}_2^{(*)}$ whenever $r_1\in I$.

\begin{figure}[H]
\centering
\includegraphics[width=0.75\textwidth]{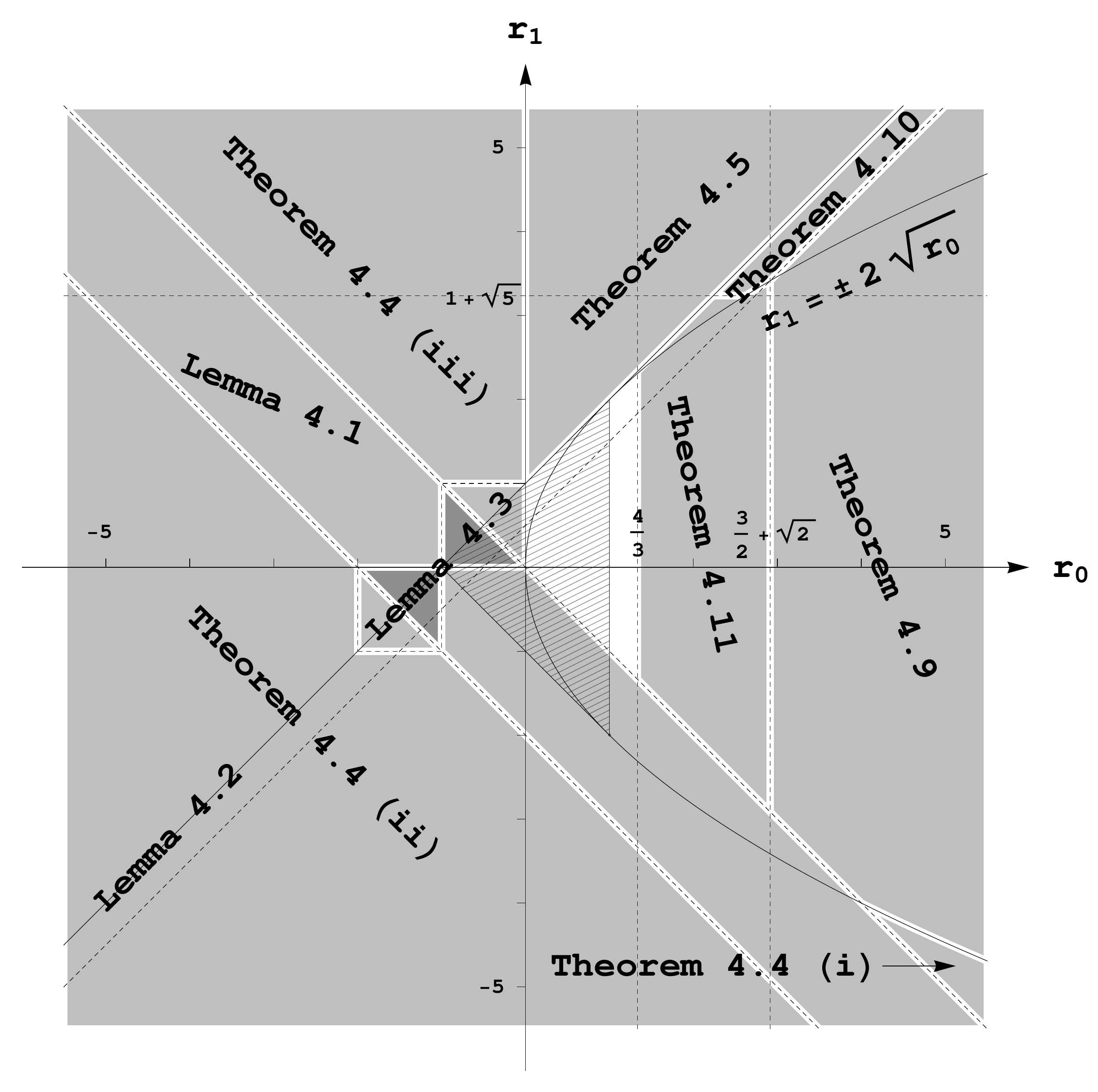}\\
\includegraphics[width=0.75\textwidth]{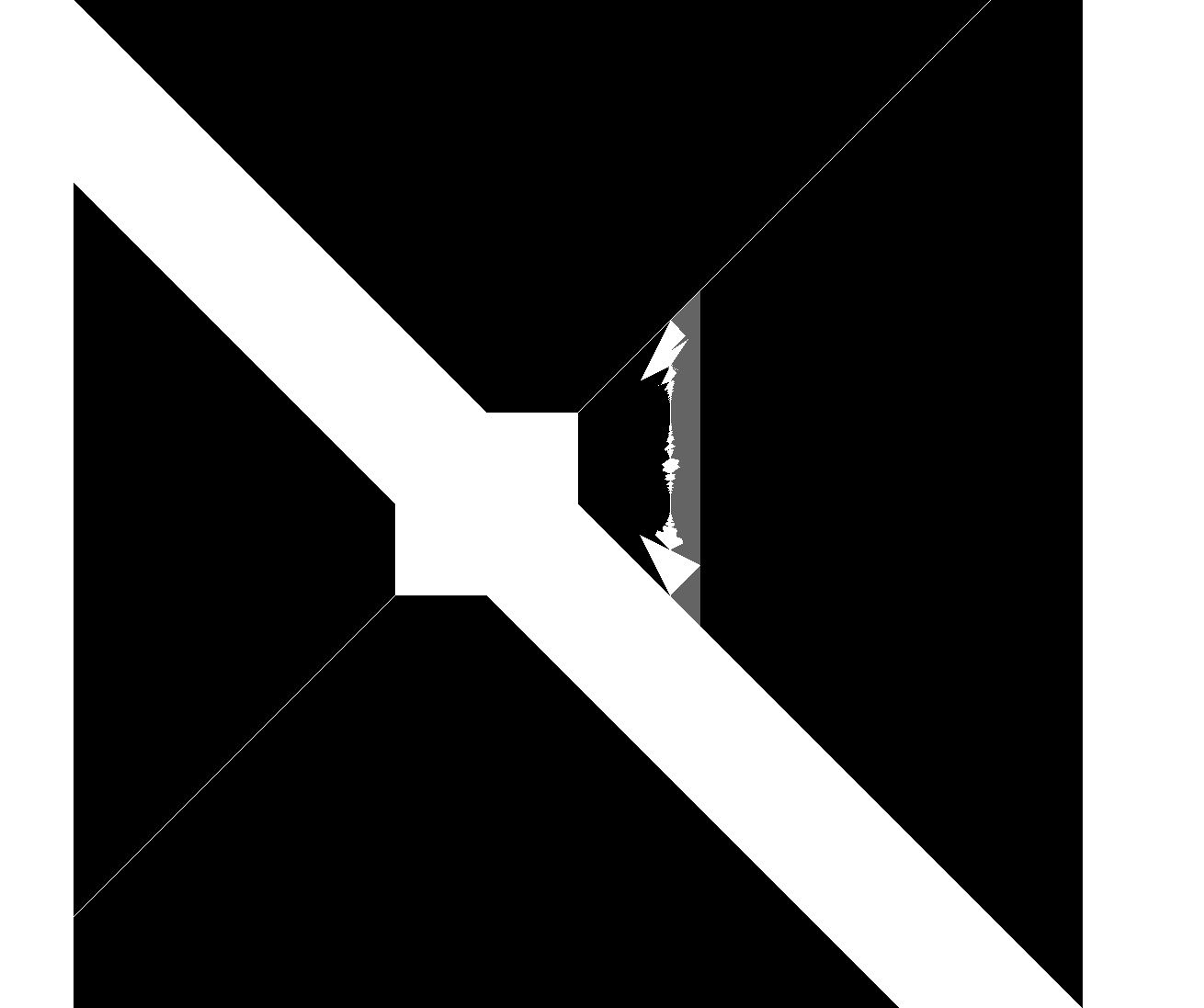}
\caption{The top image shows which regions are covered by the different theorems. The bottom image shows the resulting shape of $\Dstar{2}$. Black areas belong to $\Dstar{2}$, white areas do not and gray areas have not been settled by now.}
\label{fig:overview}
\end{figure}

\newpage

\section{General results on bounded nearly linear recursive sequences}

The present section contains some preparatory results that are stated in the more general framework of nlrs. These sequences were studied thoroughly in the recent paper~\cite{Evertse}. For the sake of completeness, we recall the definition of these objects. A sequence $(a_n)$ is called \emph{nearly linear recursive} if there exist $p_0,\ldots, p_{d-1} \in \C$ such that the sequence $(e_n)$, defined by
\[
a_{n+d} + p_{d-1}a_{n+d-1} + \cdots + p_0a_n = e_n,
\]
is bounded.

Our Theorem~\ref{TBoundedSeq} is a kind of complement to \cite[Theorem~1.1]{Evertse}: in the terminology of this paper it deals with nlrs with constant characteristic polynomials.

First, we introduce the following lemma:

\begin{lemma}
\label{LSeqBounded}
Let $\beta\in\C$ with $\abs{\beta}\neq1$ and $(b_n)\in\C^{\Nz}$ such that
\begin{align}
\label{ESeqA}
|b_{n+1}-\beta b_n|\le E
\end{align}
for all $n\in\Nz$. Then the following assertions hold:
\begin{itemize}
\item[(i)] If $\abs{\beta}<1$, then for each $\varepsilon>0$ there is $n_0\in\mathbb{N}$ such that $\abs{b_n} < \frac{E}{1-\abs{\beta}}+ \varepsilon$ holds for $n \ge n_0$. In particular, $(|b_n|)$ is bounded.
\item[(ii)] If $\abs{\beta}>1$ and $(|b_n)|$ is bounded, then $\abs{b_n}\leq\frac{E}{\abs{\beta}-1}$ for all $n \in \Nz$.
\end{itemize}
\end{lemma}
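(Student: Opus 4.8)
The plan is to convert the one-step comparison hypothesis \eqref{ESeqA} into an inequality between consecutive absolute values and then exploit whether the geometric factor $\abs{\beta}$ contracts or expands. Concretely, for both parts I would first record the elementary consequences of \eqref{ESeqA} and the triangle inequality, namely
\[
\abs{b_{n+1}} \le \abs{\beta}\abs{b_n} + E
\qquad\text{and}\qquad
\abs{\beta}\abs{b_n} \le \abs{b_{n+1}} + E,
\]
both valid for all $n\in\Nz$.

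For part (i), where $\abs{\beta}<1$, I would iterate the first inequality to obtain
\[
\abs{b_n} \le \abs{\beta}^n\abs{b_0} + E\sum_{k=0}^{n-1}\abs{\beta}^k = \abs{\beta}^n\abs{b_0} + E\,\frac{1-\abs{\beta}^n}{1-\abs{\beta}}.
\]
Since $\abs{\beta}<1$, the first summand tends to $0$ and the geometric sum converges to $\frac{1}{1-\abs{\beta}}$, so that $\limsup_n\abs{b_n}\le\frac{E}{1-\abs{\beta}}$. The $\varepsilon$-statement is then immediate from the definition of $\limsup$, and boundedness of $(\abs{b_n})$ follows at once.

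For part (ii), where $\abs{\beta}>1$, I would instead use the second (companion) inequality together with the boundedness hypothesis. Setting $M\ce\sup_n\abs{b_n}<\infty$, the estimate $\abs{\beta}\abs{b_n}\le\abs{b_{n+1}}+E\le M+E$, valid for every $n$, yields after dividing by $\abs{\beta}$ and passing to the supremum over $n$ the self-referential bound $M\le\frac{M+E}{\abs{\beta}}$. Rearranging gives $(\abs{\beta}-1)M\le E$, hence $M\le\frac{E}{\abs{\beta}-1}$ because $\abs{\beta}-1>0$; since $\abs{b_n}\le M$ for all $n$, this is exactly the claimed uniform bound.

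None of the steps is genuinely difficult; the only point requiring a little care is in (ii), where the goal is a bound holding for \emph{every} index rather than merely asymptotically. This is precisely where the boundedness hypothesis is essential: it guarantees that $M=\sup_n\abs{b_n}$ is finite, so that the fixed-point-type inequality $M\le(M+E)/\abs{\beta}$ can legitimately be solved for $M$. Without the finiteness of $M$ this argument would be vacuous, which also explains why boundedness must be assumed in (ii) but can be concluded in (i).
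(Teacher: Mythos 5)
Your proof is correct, and while part (i) matches the paper in substance, part (ii) takes a genuinely different route. For (i), the paper telescopes the relation $b_{\ell+1}-\beta b_\ell=e_\ell$ into the identity \eqref{ESeqB} (with $k=1$) and estimates the resulting geometric sum, which is exactly what your iteration of $\abs{b_{n+1}}\le\abs{\beta}\abs{b_n}+E$ produces, so the two arguments are the same computation in different clothing. For (ii), the paper divides the $n$-step telescoped identity by $\beta^n$ to get $\abs{b_k}<\frac{E}{\abs{\beta}-1}+\frac{B}{\abs{\beta}^n}$ for each fixed $k$ and then lets $n\to\infty$; you instead use the reverse triangle inequality $\abs{\beta}\abs{b_n}\le\abs{b_{n+1}}+E$ and close with a one-step supremum argument, $M\le(M+E)/\abs{\beta}$ for $M\ce\sup_n\abs{b_n}$, which solves to $M\le E/(\abs{\beta}-1)$ precisely because $M<\infty$ by hypothesis --- a point you correctly identify as the crux, and which mirrors the role the finite bound $B$ plays in the paper's limit argument. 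Your version is shorter and avoids any limiting process, at the price of collapsing to absolute values immediately; the paper's telescoping identity retains the signed structure of the error terms $e_j$, and it is exactly this finer information that gets reused later (in the proof of Proposition~\ref{Prop26}, the one-sided bounds $0\le e_j<1$ are pushed through the same telescoping to obtain asymmetric bounds that an absolute-value-only argument cannot deliver). For the lemma as stated, both proofs are equally valid; the paper's method is evidently chosen with those later refinements in mind.
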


\begin{proof}
Set
$$
b_{\ell+1} - \beta b_{\ell} = e_{\ell}.
$$
Multiplying this by $\beta^j$ for $\ell=n+k-j-1$, $j=0,\ldots,n-1$ and summing up the resulting equations yields
\begin{align}
\label{ESeqB}
b_{n+k}-\beta^n b_k=e_{n+k-1}+\beta e_{n+k-2}+\cdots+e_k\beta^{n-1}
\end{align}
for all $n\in\Nz$ and $k\in\N$.

If $\abs{\beta}<1$ and $k=1$ we thus get
\begin{align}
\abs{b_{n+1}}&\leq\sum_{j=1}^n\abs{e_j}\abs{\beta}^{n-j}+\abs{\beta}^n\abs{b_1}<\frac{E}{1-\abs{\beta}}+\abs{\beta}^n\abs{b_1}.
\end{align}
Since  $\abs{\beta}^n\abs{b_1}$ tends to $0$ for $n\to\infty$,(i) is proven.

In order to prove (ii), let us assume that $\abs{\beta}>1$ and that there exists $B > 0$ satisfying $\abs{b_n}\leq B$ for all $n\in\Nz$. Upon division of both sides of \eqref{ESeqB} by $\beta^n$, we get
\begin{align*}
\abs{b_k}=\abs{-\frac{1}{\beta}\sum_{j=0}^{n-1}e_{k+j}\beta^{-j}+\frac{b_{n+k}}{\beta^n}}<\frac{E}{\abs{\beta}}\sum_{j=0}^\infty\frac{1}{\abs{\beta}^j}+\frac{\abs{b_{n+k}}}{\abs{\beta}^n}\leq\frac{E}{\abs{\beta}-1}+\frac{B}{\abs{\beta}^n}.
\end{align*}
This proves (ii) because $k$ is fixed, and ${B}/{\abs{\beta}^n}$ tends to $0$ for $n\to\infty$.
\end{proof}

The following result, which is of interest in its own right, contains bounds for certain nlrs.

\begin{theorem}
\label{TBoundedSeq}
Let $d\in\N$ and $\beta_1,\ldots,\beta_d\in\C$ such that $\abs{\beta_1}\leq\cdots\leq\abs{\beta_r}<1<\abs{\beta_{r+1}}\leq\cdots\leq\abs{\beta_d}$ for some $r\in\set{1,\ldots,d}$. Furthermore, let $(a_n),(e_n)\in\C^{\Nz}$ with $\abs{e_n}\leq E$ for all $n\in\Nz$ and some $E>0$, such that
\begin{align}
\label{ESeqC}
a_{n+d}+p_{d-1}a_{n+d-1}+\cdots+p_0a_n=e_n
\end{align}
for all $n\in\Nz$, where $(x-\beta_1)\cdots(x-\beta_d)=x^d+p_{d-1}x^{d-1}+\cdots+p_1x+p_0$. Then,
\begin{itemize}
\item[(i)] If $r=d$, then for each $\varepsilon>0$ there exists $n_0\in\mathbb{N}$ such that
$$
\abs{a_n}<\frac{E}{\prod_{j=1}^d(1-\abs{\beta_j})} +\varepsilon
$$
for $n \ge n_0$. In particular, $(|a_n|)$ is bounded.
\item[(ii)] If $r<d$ and $(|a_n|)$ is bounded, then for each $\varepsilon> 0$ there exists $n_0\in\mathbb{N}$ such that
$$
\abs{a_n}<\frac{E}{\prod_{j=1}^d\abs{1-\abs{\beta_j}}} + \varepsilon
$$
for
$n \ge n_0$.
\end{itemize}
\end{theorem}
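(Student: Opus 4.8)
The plan is to factor the recursion \eqref{ESeqC} into linear pieces over the shift operator and to peel them off one at a time, applying Lemma~\ref{LSeqBounded} at each step. Writing $S$ for the shift $(Sx)_n=x_{n+1}$, hypothesis \eqref{ESeqC} reads $(S-\beta_1)\cdots(S-\beta_d)(a_n)=(e_n)$. Since the factors commute I may peel in any order; concretely I would set, for $j=0,\dots,d$,
\[
h^{(j)}:=(S-\beta_{j+1})\cdots(S-\beta_d)(a_n),
\]
so that $h^{(d)}=(a_n)$, $h^{(0)}=(e_n)$, and $h^{(j-1)}=(S-\beta_j)h^{(j)}$, that is
\[
h^{(j-1)}_n=h^{(j)}_{n+1}-\beta_j h^{(j)}_n\qquad(\star).
\]
Thus $\abs{h^{(j)}_{n+1}-\beta_j h^{(j)}_n}=\abs{h^{(j-1)}_n}$ is exactly the situation of Lemma~\ref{LSeqBounded} with $b=h^{(j)}$, $\beta=\beta_j$ and error $h^{(j-1)}$: starting from the known bound $E$ on $h^{(0)}=(e_n)$, I recover $h^{(1)},\dots,h^{(d)}=(a_n)$ successively.

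Writing $L_j:=\limsup_n\abs{h^{(j)}_n}$, so that $L_0\le E$ and $L_d=\limsup_n\abs{a_n}$, the whole argument rests on the single estimate
\[
L_j\le\frac{L_{j-1}}{\abs{1-\abs{\beta_j}}}\qquad(\dagger),
\]
valid whenever $\abs{\beta_j}<1$, or $\abs{\beta_j}>1$ together with $(\abs{h^{(j)}_n})$ bounded. For $\abs{\beta_j}<1$ this is the $\limsup$ form of Lemma~\ref{LSeqBounded}(i): apply (i) to a tail of $h^{(j-1)}$ bounded by $L_{j-1}+\varepsilon$, shift the index so that this becomes a uniform bound, and let $\varepsilon\to0$; this also yields boundedness of $h^{(j)}$. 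For $\abs{\beta_j}>1$ it is the analogous refinement of Lemma~\ref{LSeqBounded}(ii), obtained by rerunning the computation in the proof of (ii) with the tail bound $L_{j-1}+\varepsilon$ in place of the uniform $E$.

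For part (i) every $\abs{\beta_j}<1$, so $(\dagger)$ applies at each step with no extra hypothesis, and iterating from $L_0\le E$ gives $L_d\le E/\prod_{j=1}^d(1-\abs{\beta_j})$, while boundedness of $(a_n)$ is produced along the way. For part (ii) I first observe, using $(\star)$ in the form $\abs{h^{(j-1)}_n}\le\abs{h^{(j)}_{n+1}}+\abs{\beta_j}\,\abs{h^{(j)}_n}$ together with the assumption that $(\abs{a_n})=(\abs{h^{(d)}_n})$ is bounded, that a downward induction on $j$ makes every $h^{(j)}$ bounded; this supplies exactly the boundedness hypothesis that $(\dagger)$ needs at the expanding factors. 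Iterating $(\dagger)$ over all $j$ then yields $L_d\le E/\prod_{j=1}^d\abs{1-\abs{\beta_j}}$. In either case the bound $L_d=\limsup_n\abs{a_n}\le M$ on the constant $M$ in question is precisely the asserted statement that $\abs{a_n}<M+\varepsilon$ for all sufficiently large $n$.

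The step I expect to need the most care is $(\dagger)$ at the expanding factors: Lemma~\ref{LSeqBounded}(ii) is phrased with a uniform bound on the error and concludes a bound valid for all $n$, whereas in the iteration the error $h^{(j-1)}$ is only controlled asymptotically, through $L_{j-1}$. Passing to a tail (equivalently, shifting the index so that the asymptotic control becomes a uniform bound) lets me invoke (ii) verbatim; the boundedness of each $h^{(j)}$ that (ii) additionally requires is the structural reason part (ii) must assume $(\abs{a_n})$ bounded, and it is exactly what the downward induction provides.
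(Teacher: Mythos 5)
Your proposal is correct and is essentially the paper's own argument: the paper proves the theorem by induction on $d$, peeling off the factor $x-\beta_d$ via $b_n \ce a_{n+1}-\beta_d a_n$ and applying Lemma~\ref{LSeqBounded} at each level, which when unwound produces exactly your intermediate sequences $h^{(j)}$ and the iterated estimate $(\dagger)$. Your explicit $\limsup$ bookkeeping and the tail-passing needed to feed an asymptotic error bound into Lemma~\ref{LSeqBounded}, as well as the downward induction giving boundedness of every $h^{(j)}$, merely spell out details the paper treats implicitly (``Clearly, if $(|a_n|)$ is bounded, then so is $(|b_n|)$'').
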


\begin{proof}
The assertion is true for $d=1$ by Lemma~\ref{LSeqBounded}. Assume that it is also true for $d-1$ and let
\begin{align}
s(x)&\ce x-\beta_d
\end{align}
and $q(x)=x^{d-1}+q_{d-2}x^{d-2}+\ldots+q_1x+q_0\in\C[x]$ such that
\begin{align}
p(x)=q(x)s(x).
\end{align}
Furthermore, let
\begin{align}
b_n&\ce a_{n+1}-\beta_da_n
\end{align}
for all $n\in\N$ and let $\sigma$ denote the shift operator. Then,
\begin{align}
p\left(\sigma\right)(a_n)=q\left(\sigma\right)s\left(\sigma\right)(a_n)=q\left(\sigma\right)(b_n)
\end{align}
for all $n\in\N$ which, together with \eqref{ESeqC}, yields
\begin{align}
e_n&=\sum_{j=0}^dp_{d-j}a_{n+d-j}
=\sum_{j=1}^dq_{d-j}b_{n+d-j}
\end{align}
for all $n\in\N$.

\ignore{
\begin{align}
S_j(x_1,\ldots,x_k)\ce\sum_{1\leq i_1<\cdots<i_j\leq k}x_{i_1}\cdots x_{i_j}
\end{align}
for all $k\in\N$ and $j\in\set{1,\ldots,k}$, the $j$-th elementary symmetric polynomial on the indeterminates $x_1,\ldots, x_k$. Furthermore, we set $S_0(x_1,\ldots,x_k)\ce1$ for all $k\in\N$ and $S_j(x_1,\ldots,x_k)\ce0$ for all $k\in\N$ and for all $j\in\Z\setminus\set{0,\ldots,k}$. It is well known that
\begin{align}
p_{d-j}=(-1)^j S_j(\beta_1,\ldots,\beta_d)
\end{align}
for all $j\in\set{1,\ldots,d}$. Clearly,
\begin{align}
S_j(x_1,\ldots,x_k)&=x_k S_{j-1}(x_1,\ldots,x_{k-1})+S_j(x_1,\ldots,x_{k-1})
\end{align}
which, together with \eqref{ESeqC}, yields
\begin{align*}
e_n &= \sum_{j=0}^d p_{d-j} a_{n+d-j} \\
&= \sum_{j=0}^d (-1)^j S_j(\beta_1,\dots,\beta_d) a_{n+d-j} \\
&=\sum_{j=0}^d(-1)^j(\beta_dS_{j-1}(\beta_1,\ldots,\beta_{d-1})+S_j(\beta_1,\ldots,\beta_{d-1}))a_{n+d-j}\\
&=\sum_{j=0}^{d-1}(-1)^jS_j(\beta_1,\ldots,\beta_{d-1})(a_{n+d-j}-\beta_da_{n+d-j-1})\\
&=\sum_{j=0}^{d-1}(-1)^jS_j(\beta_1,\ldots,\beta_{d-1})b_{n+d-j-1},
\end{align*}
where $b_n\ce a_{n+1}-\beta_da_n$ for all $n\in\N$.
}

Clearly, if $(|a_n|)$ is bounded, then so is $(|b_n|)$ and the assumptions of the theorem hold for both sequences. By the induction hypothesis we get that for each $\varepsilon > 0$
\begin{align}
\abs{b_n}<\frac{E}{\prod_{j=1}^{d-1}\abs{1-\abs{\beta_j}}}+ \varepsilon
\end{align}
for all large enough $n\in\Nz$. Thus, by Lemma \ref{LSeqBounded},
\begin{align}
\abs{a_n}<\frac{E}{\prod_{j=1}^d\abs{1-\abs{\beta_j}}}+ \varepsilon
\end{align}
holds for all large enough $n\in\Nz$.
\end{proof}


\section{Bounded orbits of expansive shift radix systems}

\subsection{General SRS}
$ $\\
The situation for expanding and contractive $\tau_\vector{r}$ is related, as the following consequence of Theorem~\ref{TBoundedSeq} indicates.

\begin{corollary} \label{Cor23}
Assume that the sequence of integers $(a_n)$ satisfies \eqref{Eqnlrs} for all $n\in\Nz$. Let $x^d+r_{d-1}x^{d-1}+\cdots+r_1x+r_0=(x-\beta_1)\cdots(x-\beta_d)$ where $\beta_1,\ldots,\beta_d\in\C$ such that $\abs{\beta_1}\leq\cdots\leq\abs{\beta_r}<1<\abs{\beta_{r+1}}\leq\cdots\leq\abs{\beta_d}$ for some $r\in\set{0,\ldots,d}$. Then,
\begin{itemize}
\item[(i)] if $r=d$, or,
\item[(ii)] if $r<d$ and $(|a_n|)$ is bounded,
\end{itemize}
then $(a_n)$ is ultimately periodic and
$$
\abs{a_n}\le \frac{1}{\prod_{j=1}^d\abs{1-\abs{\beta_j}}}
$$
holds for all elements of the cycle.

\end{corollary}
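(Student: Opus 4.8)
The plan is to read this off from Theorem~\ref{TBoundedSeq} by specializing to $E=1$ and then exploiting that the $a_n$ are integers. Since $(a_n)$ satisfies \eqref{Eqnlrs}, the error terms obey $e_n\in[0,1)$, hence $\abs{e_n}\le 1$, and the factorization of $x^d+r_{d-1}x^{d-1}+\cdots+r_0$ into $(x-\beta_1)\cdots(x-\beta_d)$ is exactly the one Theorem~\ref{TBoundedSeq} requires. Thus I would apply that theorem with $E=1$: in case (i), where $r=d$, it yields boundedness of $(\abs{a_n})$ outright, while in case (ii) boundedness is part of the hypothesis. In both cases the theorem provides, for every $\varepsilon>0$, an index $n_0$ with $\abs{a_n}<C+\varepsilon$ for all $n\ge n_0$, where $C\ce 1/\prod_{j=1}^d\abs{1-\abs{\beta_j}}$; note that $C$ is finite since no $\abs{\beta_j}$ equals $1$. (The extreme value $r=0$ lies just outside the range stated in Theorem~\ref{TBoundedSeq}, but it is covered by the very same induction: one repeatedly splits off an expanding factor $x-\beta_d$ and invokes Lemma~\ref{LSeqBounded}(ii) at each stage, so the estimate persists.)

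Next I would establish ultimate periodicity. Rewriting \eqref{Eqnlrs} as $a_{n+d}=-\floor{r_{d-1}a_{n+d-1}+\cdots+r_0a_n}$ shows that the state vectors $(a_n,\ldots,a_{n+d-1})=\tau_\vector{r}^n(a_0,\ldots,a_{d-1})$ evolve under the deterministic map $\tau_\vector{r}$. Since $(\abs{a_n})$ is bounded, say by $M$, every state vector lies in the finite set $\set{-M,\ldots,M}^d$, so two of them must coincide; determinism of $\tau_\vector{r}$ then forces the orbit to be ultimately periodic.

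It remains to bound the entries of the cycle, and this is where I expect the only genuine subtlety. Let $v$ be an entry of a cycle of $(a_n)$. By periodicity, $v=a_n$ for infinitely many $n$. Given $\varepsilon>0$, I pick the associated $n_0$ and an index $n\ge n_0$ with $a_n=v$, so that $\abs{v}=\abs{a_n}<C+\varepsilon$; letting $\varepsilon\to 0$ yields $\abs{v}\le C$, the asserted inequality. The delicate point is precisely this passage from the asymptotic estimate $\abs{a_n}<C+\varepsilon$ to the clean bound $\abs{v}\le C$: for a single fixed index neither the $\varepsilon$ nor the strictness could be discarded, and it is exactly the recurrence of each cycle value at arbitrarily large indices, together with the integrality used to confine the orbit to a finite state space, that makes both the periodicity and the sharp bound fall out simultaneously.
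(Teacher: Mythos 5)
Your proof is correct and follows essentially the same route as the paper: apply Theorem~\ref{TBoundedSeq} with $E=1$, combine boundedness with integrality and the determinism of the recurrence \eqref{Eqnlrs} to obtain ultimate periodicity via pigeonhole, and then use the fact that each cycle value recurs at arbitrarily large indices to pass from the asymptotic estimate $\abs{a_n}<\frac{1}{\prod_{j=1}^d\abs{1-\abs{\beta_j}}}+\varepsilon$ to the clean bound with $\varepsilon$ removed. Your parenthetical handling of the edge case $r=0$ (formally excluded by the hypothesis $r\in\set{1,\ldots,d}$ of Theorem~\ref{TBoundedSeq} but permitted in the corollary) addresses a gap the paper silently glosses over, and your observation that the same induction with Lemma~\ref{LSeqBounded}(ii) covers it is correct.
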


\begin{proof}
If $r=d$, then Theorem~\ref{TBoundedSeq} (i) implies that $(a_n)$ is bounded. In the case $r<d$ the boundedness of $(a_n)$ is part of the assumptions. Since $(a_n)$ is a bounded sequence of integers, there exist $j<k$ such that $a_{j+i} = a_{k+i}$ holds for each $i\in \{0,\dots, d-1\}$. However, as a consequence of \eqref{Eqnlrs}, $a_{j+d}$ is uniquely defined in terms of $a_j,\ldots, a_{j+d-1}$ and $a_{k+d}$ is uniquely defined in terms of $a_k,\ldots, a_{k+d-1}$. Thus, $a_{j+d}=a_{k+d}$ and, by induction, $a_{j+n} = a_{k+n}$ for all $n \ge 0$. As a consequence, our sequence is ultimately periodic.

Let $a_j$  be an element of the cycle and choose $\varepsilon>0$ arbitrarily. Then, according to Theorem~\ref{TBoundedSeq}, there exists an $n_0 \in \N$ such that
\begin{equation}\label{eq:pereps}
\abs{a_n}< \frac{1}{\prod_{j=1}^d\abs{1-\abs{\beta_j}}} + \varepsilon
\end{equation}
for $n \ge n_0$. Since $a_j$ is an element of the cycle of $(a_n)$, there is an index $n\ge n_0$ such that $a_j=a_n$. Hence, the estimate in \eqref{eq:pereps} also holds for $a_j$. However, given that we choose $\varepsilon$ arbitrarily, we even get
$$
\abs{a_j}\le \frac{1}{\prod_{j=1}^d\abs{1-\abs{\beta_j}}}
$$
and the proof is finished.
\end{proof}

\subsection{Specialization to two-dimensional SRS}
$ $\\
For the remaining part of the section let $d=2$, $\vector{r}=(r_0,r_1)\in\R^2$, $\vector{a}=(a_0,a_1)\in\Z^2$, $(a_n)$ the orbit of $\vector{a}$ under $\tau_\vector{r}$, and let $(e_n)$ be the corresponding error sequence, i.e.,
\begin{equation}\label{Eqrek}
  a_{n+2}+r_1a_{n+1}+r_0a_n = e_n, \; e_n \in [0,1)
\end{equation}
for all $n\in\Nz$. Furthermore, define $\alpha_1,\alpha_2\in\C$ by
\begin{align}
x^2+r_1x+r_0=(x-\alpha_1)(x-\alpha_2).
\end{align}
Then, $r_0=\alpha_1\alpha_2$ and $r_1=-(\alpha_1+\alpha_2)$.

\begin{proposition} \label{Prop25}
Let $\abs{\alpha_1},\abs{\alpha_2}\neq1$ and assume that $(|a_n|)$ is bounded and, hence, ultimately periodic. Then all elements $a_n$, which do not belong to the preperiod, satisfy
\begin{align} \label{rek1}
\abs{a_{n+1}-\alpha_1a_n}\le\frac{1}{\abs{\abs{\alpha_{2}}-1}},\; \abs{a_{n+1}-\alpha_2a_n}\le\frac{1}{\abs{\abs{\alpha_{1}}-1}}
\end{align}
and
\begin{align} \label{rek2}
\abs{a_n}\le \frac{1}{\abs{\abs{\alpha_1}-1}\abs{\abs{\alpha_2}-1}}.
\end{align}
\end{proposition}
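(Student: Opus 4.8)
The plan is to reduce both estimates in \eqref{rek1} to the one-dimensional Lemma~\ref{LSeqBounded} by factoring the shift operator, and to obtain \eqref{rek2} directly from Corollary~\ref{Cor23}.

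First I would introduce the auxiliary sequences $b_n\ce a_{n+1}-\alpha_2a_n$ and $c_n\ce a_{n+1}-\alpha_1a_n$. Writing $\sigma$ for the shift operator and using the factorization $x^2+r_1x+r_0=(x-\alpha_1)(x-\alpha_2)$, the recurrence \eqref{Eqrek} reads $(\sigma-\alpha_1)(\sigma-\alpha_2)(a_n)=e_n$. Applying $(\sigma-\alpha_1)$ to $b_n$ then gives $b_{n+1}-\alpha_1b_n=e_n$, and symmetrically $c_{n+1}-\alpha_2c_n=e_n$; in both cases $\abs{e_n}<1$, so the hypothesis \eqref{ESeqA} of Lemma~\ref{LSeqBounded} holds with $E=1$. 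Since $(\abs{a_n})$ is bounded, so are $(\abs{b_n})$ and $(\abs{c_n})$.

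Next I would apply Lemma~\ref{LSeqBounded} to $(b_n)$ with $\beta=\alpha_1$ (recall $\abs{\alpha_1}\neq1$). If $\abs{\alpha_1}>1$, part (ii) yields $\abs{b_n}\le\frac{1}{\abs{\alpha_1}-1}$ for every $n$. If $\abs{\alpha_1}<1$, part (i) only gives $\abs{b_n}<\frac{1}{1-\abs{\alpha_1}}+\varepsilon$ for all sufficiently large $n$, where $\frac{1}{1-\abs{\alpha_1}}=\frac{1}{\abs{\abs{\alpha_1}-1}}$. To turn this into the sharp bound for the cycle elements, I would argue exactly as in the proof of Corollary~\ref{Cor23}: a periodic value $b_m$ coincides with some $b_n$ for arbitrarily large $n$, so the $\varepsilon$-estimate holds for it for \emph{every} $\varepsilon>0$, forcing the non-strict inequality $\abs{b_m}\le\frac{1}{\abs{\abs{\alpha_1}-1}}$. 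This establishes $\abs{a_{n+1}-\alpha_2a_n}\le\frac{1}{\abs{\abs{\alpha_1}-1}}$ on the cycle, and the symmetric argument applied to $(c_n)$ with $\beta=\alpha_2$ gives $\abs{a_{n+1}-\alpha_1a_n}\le\frac{1}{\abs{\abs{\alpha_2}-1}}$, which are precisely the estimates in \eqref{rek1}.

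Finally, \eqref{rek2} is the specialization of Corollary~\ref{Cor23} to $d=2$ with $\beta_j=\alpha_j$. Alternatively, it can be derived by a second application of Lemma~\ref{LSeqBounded}: since $a_{n+1}=\alpha_2a_n+b_n$ with $\abs{b_n}\le\frac{1}{\abs{\abs{\alpha_1}-1}}$ on the cycle, treating $b_n$ as a bounded error term and applying the lemma with $\beta=\alpha_2$ produces $\abs{a_n}\le\frac{1}{\abs{\abs{\alpha_1}-1}\abs{\abs{\alpha_2}-1}}$, once more cleaning up the $\varepsilon$ via periodicity. I expect the only genuine subtlety to be this passage from the eventual $\varepsilon$-estimate of Lemma~\ref{LSeqBounded}(i) to the exact bound on the cycle; the remainder is a routine operator factorization.
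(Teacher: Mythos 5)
Your proposal is correct and takes essentially the same route as the paper: factor the recurrence as $(\sigma-\alpha_1)(\sigma-\alpha_2)(a_n)=e_n$, apply Lemma~\ref{LSeqBounded} with $E=1$ to the resulting first-order difference sequences, remove the $\varepsilon$ on cycle elements exactly as in the proof of Corollary~\ref{Cor23}, and obtain \eqref{rek2} as a special instance of Corollary~\ref{Cor23}. The only cosmetic differences are that you treat both inequalities of \eqref{rek1} directly where the paper handles one and then interchanges $\alpha_1$ and $\alpha_2$, and that you sketch an optional second derivation of \eqref{rek2} which the paper does not need.
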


\begin{proof}
We have
\begin{equation}\label{eq:enbn}
\begin{split}
e_n&=a_{n+2}-(\alpha_1+\alpha_2)a_{n+1}+\alpha_1\alpha_2a_n\\
&=(a_{n+2}-\alpha_1a_{n+1})-\alpha_2(a_{n+1}-\alpha_1a_n)=b_{n+1}-\alpha_2 b_n,
\end{split}
\end{equation}
where $b_n\ce a_{n+1}-\alpha_1a_n, n\ge 0$. Following Corollary \ref{Cor23}, $(a_n)$ is ultimately periodic, hence, $(b_n)$ is ultimately periodic as well. With the sequences $(b_n)$ and $(e_n)$, and given the choices $\beta=\alpha_2$ and $E=1$, the assumptions of Lemma~\ref{LSeqBounded} are satisfied. Thus, for each $\varepsilon > 0$, we have
$$
|a_{n+1}-\alpha_1a_n| = |b_n| < \frac{1}{|1-|\alpha_2||} + \varepsilon
$$
for $n$ sufficiently large. For $b_n$ in the cycle we can get rid of the summand $\varepsilon$ following the same reasoning as in the proof of Corollary~\ref{Cor23}. Thus, we get \eqref{rek1} for $i=1$. By interchanging $\alpha_1$ and $\alpha_2$ we get the case $i=2$.

Inequality \eqref{rek2} is a special instance of Corollary \ref{Cor23}.
\end{proof}

The following result is an immediate consequence of Proposition~\ref{Prop25}.

\begin{corollary}
If $\frac{1}{\abs{\abs{\alpha_1}-1}}\frac{1}{\abs{\abs{\alpha_2}-1}}<1$, then each orbit of $\tau_\vector{r}$ is either unbounded or ends up in the cycle $(0)$.
\end{corollary}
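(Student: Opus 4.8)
The plan is to argue by a simple dichotomy applied to an arbitrary orbit $(a_n)$ of $\tau_\vector{r}$. Either the orbit is unbounded, in which case the assertion holds trivially, or it is bounded, and then I must show it ends up in the cycle $(0)$. So I assume from now on that $(|a_n|)$ is bounded.

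First I would record that the hypothesis $\frac{1}{\abs{\abs{\alpha_1}-1}}\frac{1}{\abs{\abs{\alpha_2}-1}}<1$ already forces $\abs{\alpha_1},\abs{\alpha_2}\neq1$, since otherwise the left-hand side is not even defined. Hence the standing assumption of Proposition~\ref{Prop25} is automatically satisfied. Invoking that proposition (which in turn rests on Corollary~\ref{Cor23}), the bounded sequence $(a_n)$ is ultimately periodic, and every element $a_n$ lying in the cycle obeys the bound \eqref{rek2}, i.e.
$$
\abs{a_n}\le\frac{1}{\abs{\abs{\alpha_1}-1}\abs{\abs{\alpha_2}-1}}<1,
$$
where the strict inequality is exactly the hypothesis of the corollary.

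The only remaining step, and the crux of the whole argument, is to exploit integrality: by construction $(a_n)$ is a sequence of integers, being the coordinates generated by repeatedly applying $\tau_\vector{r}$ to a lattice point $\vector{a}\in\Z^2$. An integer whose absolute value is strictly less than $1$ can only be $0$. Therefore every element of the cycle vanishes, the cycle is the trivial cycle $(0)$, and the bounded orbit ends up in $(0)$, as claimed. I do not expect any genuine obstacle here: all of the analytic content has been absorbed into Proposition~\ref{Prop25}, and the corollary contributes nothing more than the observation that, under the stated hypothesis, the derived bound falls below the minimal gap $1$ between distinct integers, which collapses the cycle to $\mathbf{0}$.
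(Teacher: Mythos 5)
Your proof is correct and is exactly the argument the paper intends: it states the corollary as an immediate consequence of Proposition~\ref{Prop25}, namely that a bounded orbit is ultimately periodic with cycle elements bounded via \eqref{rek2} by a quantity strictly less than $1$, so integrality forces the cycle to be $(0)$. Your additional remark that the hypothesis implicitly guarantees $\abs{\alpha_1},\abs{\alpha_2}\neq1$ (so Proposition~\ref{Prop25} applies) is a sound and worthwhile clarification, not a deviation.
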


Proposition~\ref{Prop25} holds for arbitrary complex numbers $\alpha_1,\alpha_2$. If they are real, then we can improve inequality \eqref{rek1} a bit, which will turn out useful in later applications.

\begin{proposition} \label{Prop26}
Let $\alpha_1,\alpha_2\neq \pm1$ be real numbers. Assume that $(|a_n|)$ is bounded and, hence, ultimately periodic. Then all elements $a_n$ contained in the cycle satisfy
\begin{align}
0 \le& a_{n+1}-\alpha_1a_n <\frac{1}{1-\alpha_{2}}, \; \mbox{if} \quad 0\le \alpha_2<1,\label{eqProp261}\\
-\frac{1}{\alpha_{2}-1} <& a_{n+1}-\alpha_1a_n\le 0 , \; \mbox{if} \quad \alpha_2>1,\label{eqProp262}\\
\frac{\alpha_2}{1-\alpha_2^2} <& a_{n+1}-\alpha_1a_n<\frac{1}{1-\alpha_{2}^2}, \; \mbox{if} \quad  -1< \alpha_2<0,\label{eqProp263}\\
\frac{-1}{\alpha_2^2-1} <& a_{n+1}-\alpha_1a_n<\frac{-\alpha_2}{\alpha_{2}^2-1}, \; \mbox{if} \quad \alpha_2<-1.\label{eqProp264}
\end{align}
\end{proposition}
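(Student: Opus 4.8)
The plan is to reduce everything to the scalar ``nearly geometric'' recurrence already isolated in the proof of Proposition~\ref{Prop25}. Setting $b_n = a_{n+1}-\alpha_1 a_n$, equation~\eqref{eq:enbn} gives $e_n = b_{n+1}-\alpha_2 b_n$, hence
\[
b_{n+1}=\alpha_2 b_n + e_n,\qquad e_n\in[0,1),
\]
for all $n$. By Corollary~\ref{Cor23} the orbit is ultimately periodic, so on the cycle $(b_n)$ is genuinely periodic; let $B^+$ and $B^-$ denote the maximum and minimum of $b_n$ over one full period. The decisive observation is that, since $(b_n)$ is periodic, the value $b_{n+1}$ runs through exactly the same finite set as $b_n$, so $B^+$ and $B^-$ are simultaneously the extremes of the left- and right-hand sides of the recurrence. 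This is what will let a single pass produce two closing inequalities.

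For the two contractive cases $0\le\alpha_2<1$ and $-1<\alpha_2<0$ I would work with the recurrence as written. Feeding $b_n\in[B^-,B^+]$ and $e_n\in[0,1)$ into $b_{n+1}=\alpha_2 b_n+e_n$ and passing to the extremes over the period yields, according to the sign of $\alpha_2$, either the pair $B^+<\alpha_2 B^++1$, $B^-\ge \alpha_2 B^-$ (when $\alpha_2\ge0$, where the multiplier preserves order) or the coupled pair $B^+<\alpha_2 B^-+1$, $B^-\ge \alpha_2 B^+$ (when $\alpha_2<0$, where multiplication by $\alpha_2$ reverses order and links $B^+$ to $B^-$). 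Solving the first pair gives $0\le b_n<\tfrac{1}{1-\alpha_2}$, which is~\eqref{eqProp261}; substituting the two members of the second pair into one another and using $1-\alpha_2^2>0$ gives $\tfrac{\alpha_2}{1-\alpha_2^2}<b_n<\tfrac{1}{1-\alpha_2^2}$, which is~\eqref{eqProp263}.

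For the expanding cases $\alpha_2>1$ and $\alpha_2<-1$ the forward recurrence no longer contracts, so I would instead invert it, writing $b_n=\alpha_2^{-1}b_{n+1}-\alpha_2^{-1}e_n$. This is again of the same contractive shape, but now with multiplier $\alpha_2^{-1}$ (of modulus $<1$) and with the error term $-\alpha_2^{-1}e_n$ ranging over an interval of length $\abs{\alpha_2}^{-1}$ rather than $1$; its sign and endpoints depend on the sign of $\alpha_2$. Running the identical max/min argument on this inverted recurrence and then rewriting $\alpha_2^{-1}$ in terms of $\alpha_2$ produces $-\tfrac{1}{\alpha_2-1}<b_n\le0$ when $\alpha_2>1$, i.e.~\eqref{eqProp262}, and $\tfrac{-1}{\alpha_2^2-1}<b_n<\tfrac{-\alpha_2}{\alpha_2^2-1}$ when $\alpha_2<-1$, i.e.~\eqref{eqProp264}.

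I expect the only real friction to be bookkeeping rather than ideas: one must correctly transfer the error interval $[0,1)$ under inversion and under multiplication by a negative $\alpha_2$, tracking both its endpoints and whether they are attained, and keep strict and non-strict inequalities straight, since the lower endpoint $e_n\ge0$ is attained while the upper endpoint $e_n<1$ is not --- this is exactly what produces the asymmetric $\le$ versus $<$ in~\eqref{eqProp261} and~\eqref{eqProp262}. Restricting attention to the cycle, as opposed to the whole ultimately periodic orbit, is essential here, because it is only there that $(b_n)$ is periodic and the extremal argument closes.
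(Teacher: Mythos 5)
Your proof is correct, but it takes a genuinely different route from the paper. The paper multiplies the cycle equations $b_{j+1}-\alpha_2 b_j=e_j$ by suitable powers of $\alpha_2$ and telescopes over one full period, obtaining the exact identity $b_0(1-\alpha_2^p)=\sum_{j=0}^{p-1}\alpha_2^j e_j$; it then bounds the right-hand side by geometric sums and divides by $1-\alpha_2^p$, whose sign encodes the four cases. For $\alpha_2<0$ this forces an extra device: the period is doubled to make $p$ even, so that the even-indexed and odd-indexed terms of the sum can be bounded separately by $\sum_j\alpha_2^{2j}=\frac{1-\alpha_2^p}{1-\alpha_2^2}$. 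Your argument is instead purely local: you evaluate the one-step recurrence $b_{n+1}=\alpha_2 b_n+e_n$ (or, for $|\alpha_2|>1$, its inverse $b_n=\alpha_2^{-1}b_{n+1}-\alpha_2^{-1}e_n$, which is again contractive) at indices of the periodic sequence where $b$ attains its maximum $B^+$ and minimum $B^-$, and close the two coupled inequalities — I checked the four cases, e.g.\ for $-1<\alpha_2<0$ one gets $B^+<\alpha_2 B^-+1$ and $B^-\ge\alpha_2 B^+$, whence $B^+<\frac{1}{1-\alpha_2^2}$ and $B^->\frac{\alpha_2}{1-\alpha_2^2}$, exactly \eqref{eqProp263}, and your inverted-recurrence bookkeeping correctly reproduces the asymmetric $\le$ versus $<$ in \eqref{eqProp262} and \eqref{eqProp264}. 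What each approach buys: the paper's telescoped identity is exact and makes the dependence on the period visible, but needs the parity trick and careful sign-chasing when dividing by $1-\alpha_2^p$; your extremal argument avoids both the geometric-series manipulation and the period-doubling, treats all four cases by one uniform mechanism (order-preserving versus order-reversing multiplier, forward versus inverted recurrence), and makes the attained/non-attained endpoints of the error interval the transparent source of the strict versus non-strict bounds. Your observation that the argument must be run on the cycle itself, where $(b_n)$ is genuinely periodic and the extremes of $b_n$ and $b_{n+1}$ coincide, is exactly the point that makes the inequalities close, and corresponds to the paper's w.l.o.g.\ reduction to a purely periodic $(a_n)$.
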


\begin{proof}
  We may assume w.l.o.g. that $(a_n)$ is purely periodic with period length $p$. Set, as in the proof of Proposition \ref{Prop25}, $b_n\ce a_{n+1}-\alpha_1 a_n$ for $n\ge 0$. Then, $(b_n)$ is periodic as well with period length $p$. Consider the equations
  \begin{equation*}
    b_{j+1} -\alpha_2 b_j = e_j
  \end{equation*}
for $j = 0,\dots,p-1$ (c.f. \eqref{eq:enbn}) and note that $b_p=b_0$. Multiplying the equations by appropriate powers of $\alpha_2$ we get
\begin{equation}\label{eqProp265}
\alpha_2^j b_{p-j} - \alpha_2^{j+1} b_{p-j-1} = \alpha_2^j e_{p-j-1}, \; j=0,\dots,p-1
\end{equation}
which yields after summation
$$
b_0(1-\alpha_2^p) = b_p-\alpha_2^pb_0 = \sum_{j=0}^{p-1}\alpha_2^j(b_{p-j}-\alpha_2b_{p-j-1})=\sum_{j=0}^{p-1}\alpha_2^je_j.
$$
If $\alpha_2>0$, we get
$$
0\le b_0(1-\alpha_2^p) < \sum_{j=0}^{p-1} \alpha_2^j = \frac{1-\alpha_2^p}{1-\alpha_2},
$$
due to $0\le e_j<1$. Distinguishing the cases $0<\alpha_2<1$ and  $\alpha_2>1$, we get the first two inequalities.

\medskip

If $\alpha_2<0$, we assume for simplicity that $p$ is even, say $p=2p_1$. This is allowed because if $p$ is a period length of a sequence, then $2p$ is a period length too. Equation~\eqref{eqProp265} implies that
$$
0\le \alpha_2^j b_{p-j} - \alpha_2^{j+1} b_{p-j-1} < \alpha_2^j
$$
if $j$ is even, and
$$
\alpha_2^j < \alpha_2^j b_{p-j} - \alpha_2^{j+1} b_{p-j-1} \le 0
$$
if $j$ is odd. Summing the inequalities above for $j=0,\dots,2p_1-1$, we obtain
$$
\alpha_2 \sum_{j=0}^{p_1-1} \alpha_2^{2j} < b_0(1-\alpha_2^p) < \sum_{j=0}^{p_1-1} \alpha_2^{2j}.
$$
Using $$\sum_{j=0}^{p_1-1} \alpha_2^{2j} = \frac{1-\alpha_2^p}{1-\alpha_2^2}$$ and distinguishing the cases $-1<\alpha_2<0$ and  $\alpha_2<-1$, we get the last two inequalities.
\end{proof}

\section{Characterization of $\mathcal{D}_2^{(*)}$}

\subsection{Regions outside of $\mathcal{D}_2^{(*)}$}

\begin{lemma}
\label{LSameSign}
The mapping $\tau_\vector{r}$ has a nontrivial cycle whose elements all have the same sign if and only if $-2<r_0+r_1<0$. Thus, the set $\{(r_0,r_1)\in \mathbb{R}^2\,:\, -2<r_0+r_1<0\}$ has an empty intersection with $\mathcal{D}_2^{(*)}$

In particular, if $-1 \le r_0+r_1 < 0$, then $(1)$ is a cycle of $\tau_\vector{r}$, and if $-2 < r_0+r_1 < -1$, then $(-1)$ is a cycle of $\tau_\vector{r}$.
\end{lemma}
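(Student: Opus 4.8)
The plan is to prove the two directions separately, the key device being the summation of the recurrence \eqref{Eqrek} over a full period: this collapses the linear part by periodicity and isolates the quantity $1+r_0+r_1$, which is exactly what must be controlled.

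For the easy direction (which simultaneously yields the two ``In particular'' assertions), I would directly check when a constant sequence is a cycle. A constant sequence $a_n=c$ is a cycle of $\tau_\vector{r}$ precisely when $\tau_\vector{r}(c,c)=(c,c)$, i.e. when $-\floor{(r_0+r_1)c}=c$. Taking $c=1$ gives $\floor{r_0+r_1}=-1$, equivalently $-1\le r_0+r_1<0$; taking $c=-1$ gives $\floor{-(r_0+r_1)}=1$, equivalently $-2<r_0+r_1\le-1$. These two ranges together cover the whole open strip $-2<r_0+r_1<0$, and in each case the resulting nontrivial cycle $(1)$ or $(-1)$ has all elements of the same sign. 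Since the existence of a nontrivial cycle means $\vector{r}\notin\Dstar{2}$, this settles the ``if'' direction, the emptiness of the intersection with $\Dstar{2}$, and both ``In particular'' claims.

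For the converse I would start from a nontrivial cycle all of whose elements share the same sign. As in the proof of Proposition~\ref{Prop26}, I may assume the cycle is purely periodic, say $(a_0,\dots,a_{p-1})$ with $a_{n+p}=a_n$. Summing \eqref{Eqrek} over $n=0,\dots,p-1$ and using periodicity to identify the three shifted sums, writing $S$ for $\sum_{n=0}^{p-1}a_n$, I obtain
\[
(1+r_0+r_1)\,S=\sum_{n=0}^{p-1}e_n,\qquad 0\le\sum_{n=0}^{p-1}e_n<p,
\]
the bound on the right coming from $e_n\in[0,1)$. Because the elements are integers all of one sign, each satisfies $\abs{a_n}\ge1$, so $\abs{S}\ge p$ and, crucially, $S\neq0$; this is precisely the point at which the same-sign hypothesis is used.

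Finally I would read off $1+r_0+r_1=\bigl(\sum e_n\bigr)/S$ and estimate it in the two cases. If all $a_n\ge1$, then $S\ge p$ forces $0\le 1+r_0+r_1<p/S\le 1$, giving $-1\le r_0+r_1<0$; if all $a_n\le-1$, then $S\le-p$ forces $-1<1+r_0+r_1\le0$, giving $-2<r_0+r_1\le-1$. Either way $-2<r_0+r_1<0$, which completes the equivalence. I expect no deep difficulty here: the only points demanding care are keeping the inequalities strict on the correct sides so that the open strip (rather than its closure) emerges, and recognizing that the same-sign assumption is exactly what excludes the degenerate case $S=0$ that would otherwise void the division. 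These are the mild obstacles, not genuine ones.
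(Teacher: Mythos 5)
Your proposal is correct and takes essentially the same route as the paper: the ``only if'' direction by summing the defining inequalities \eqref{Eqrek} over one full period, using the same-sign hypothesis to get $\abs{\sum_{n=0}^{p-1}a_n}\geq p$ and hence $-1<r_0+r_1+1<1$, and the ``if'' direction by exhibiting the explicit constant cycles $(1)$ and $(-1)$. The only cosmetic difference is that the paper additionally records the cycles $(t)$ for all $0<\abs{t}\leq h$ when $\abs{r_0+r_1+1}<\frac{1}{h}$, which your argument rightly omits as unnecessary for the stated claim.
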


\begin{proof}
Assume that $\tau_\vector{r}$ has a nontrivial cycle $(a_0,\ldots,a_{p-1})$ whose members have the same sign. Then,
\begin{align}
0\leq a_ir_0+a_{i+1}r_1+a_{i+2}<1
\end{align}
for all $i\in\set{0,\ldots,p-1}$. Summing up these inequalities and taking into account that $a_p=a_0$ and $a_{p+1}=a_1$, we get
\begin{align}
0\leq\sum_{i=0}^{p-1}a_i(r_0+r_1+1)<p.
\end{align}
Since all $a_i$ have the same sign (and cannot be $0$ by nontriviality of the cycle), it follows that
\begin{align}
\abs{\sum_{i=0}^{p-1}a_i}\geq p
\end{align}
and, hence,
\begin{align}
-1<r_0+r_1+1<1.
\end{align}
If $-1\leq r_0+r_1<0$, i.e., $0\leq r_0+r_1+1<\frac{1}{h}$ for $0\neq h\in\N$, it follows that $(t)$, where $0<t\leq h$, is a cycle of $\tau_\vector{r}$. In particular, $(1)$ is a cycle of $\tau_\vector{r}$ if $-1\leq r_0+r_1<0$.

If $-2<r_0+r_1<-1$, i.e., $-\frac{1}{h}<r_0+r_1+1<0$ for $0\neq h\in\N$, then $0<tr_0+tr_1+t<-\frac{t}{h}\leq1$ for $-h\leq t<0$, hence, $(t)$ is a cycle of $\tau_\vector{r}$. In particular, $(-1)$ is a cycle of $\tau_\vector{r}$ if $-1\leq r_0+r_1+1<0$.
\end{proof}

\begin{lemma}
\label{LAltSign}
If $\tau_\vector{r}$ has a cycle of alternating signs, then $\abs{r_0-r_1+1}<\frac{1}{2}$. Furthermore, $\tau_\vector{r}$ has a cycle of the form $(t,-t)$ for some $t\in\Z\setminus\set{0}$ if and only if $r_0-r_1+1=0$. Thus, $\{(r_0,r_0+1)  \mid r_0\in \mathbb{R} \}$ has empty intersection with $\mathcal{D}_2^{(*)}$.
\end{lemma}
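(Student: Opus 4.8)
The plan is to mirror the proof of Lemma~\ref{LSameSign}, but to replace the plain sum over a cycle by an \emph{alternating} sum, which is the natural device for isolating the quantity $r_0-r_1+1$. The key observation is that if $(a_0,\ldots,a_{p-1})$ is a cycle whose signs strictly alternate, then the numbers $(-1)^i a_i$ all share one common sign; moreover the alternation forces the period $p$ to be even, since otherwise the sign could not return to its initial value after $p$ steps (and if one prefers, one may always pass to the period $2p$, as in the proof of Proposition~\ref{Prop26}).

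For the first assertion I would set $S\ce\sum_{i=0}^{p-1}(-1)^i a_i$ and form the alternating sum of the defining relation~\eqref{Eqrek} over one period. Using $a_p=a_0$, $a_{p+1}=a_1$, the evenness of $p$, and the index shifts $i\mapsto i+1$ and $i\mapsto i+2$, one checks that $\sum_{i=0}^{p-1}(-1)^i a_{i+2}=S$, that $\sum_{i=0}^{p-1}(-1)^i a_{i+1}=-S$, and that $\sum_{i=0}^{p-1}(-1)^i a_i=S$, whence
\[
\sum_{i=0}^{p-1}(-1)^i e_i = S\,(r_0-r_1+1).
\]
Here $\abs{S}=\sum_{i=0}^{p-1}\abs{a_i}\ge p$, because the $(-1)^i a_i$ are nonzero integers of a single sign. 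On the other hand, the $p/2$ even-indexed terms of the left-hand side contribute a value in $[0,p/2)$ and the $p/2$ odd-indexed terms a value in $(-p/2,0]$, since each $e_i\in[0,1)$; hence the alternating sum lies in $(-p/2,p/2)$. Dividing by $\abs{S}$ then yields $\abs{r_0-r_1+1}<\tfrac12$.

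For the characterization of cycles of the form $(t,-t)$ I would simply substitute the candidate orbit $a_n=(-1)^n t$ into~\eqref{Eqrek}, obtaining $e_n=(-1)^n t\,(r_0-r_1+1)$. Requiring $e_n\in[0,1)$ for both parities of $n$ forces $t(r_0-r_1+1)\in[0,1)\cap(-1,0]=\set{0}$, and since $t\neq0$ this gives $r_0-r_1+1=0$. Conversely, if $r_0-r_1+1=0$ then $e_n=0$ for all $n$, and a short verification that $-\floor{r_0a_n+r_1a_{n+1}}=a_{n+2}$ confirms that $(t,-t)$ is a genuine cycle of $\tau_\vector{r}$ for every $t\in\Z\setminus\set{0}$. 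The final clause is then immediate: the line $\set{(r_0,r_0+1)\mid r_0\in\R}$ is exactly the locus $r_0-r_1+1=0$, on which $(1,-1)$ is a nontrivial cycle, so these points cannot lie in $\mathcal{D}_2^{(*)}$.

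I expect the only genuinely delicate point to be the index bookkeeping in the alternating sum: one must invoke the evenness of $p$ together with periodicity to collapse each of the three shifted sums to $\pm S$, and one must track the error terms carefully enough to obtain the open interval $(-p/2,p/2)$, since it is precisely this bound, together with $\abs{S}\ge p$, that produces the sharp constant $\tfrac12$.
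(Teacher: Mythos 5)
Your proposal is correct and takes essentially the same approach as the paper: your alternating sum $\sum_{i=0}^{p-1}(-1)^i e_i = S\,(r_0-r_1+1)$ with the bounds $[0,p/2)$ and $(-p/2,0]$ on the even- and odd-indexed contributions is exactly what the paper obtains by multiplying the odd-indexed inequalities $0\le a_ir_0+a_{i+1}r_1+a_{i+2}<1$ by $-1$ and summing over the cycle, followed by the same observation $\abs{\sum_{i=0}^{p-1}(-1)^ia_i}\ge p$. Your handling of the $(t,-t)$ characterization likewise matches the paper's substitution into the defining inequalities for $i=0,1$; your explicit index bookkeeping and verification of the converse direction merely spell out details the paper leaves implicit.
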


\begin{proof}
Assume that $\tau_\vector{r}$ has a cycle $(a_0,\ldots,a_{p-1})$ with $a_ia_{i+1}<0$ for all $i\in\set{0,\ldots,p-1}$. Then, the period $p$ has to be even and we may assume w.l.o.g. that $a_0>0$. By definition,
\begin{align} \label{e4_5}
0\leq a_ir_0+a_{i+1}r_1+a_{i+2}<1
\end{align}
for all $i\in\set{0,\ldots,p-1}$. Multiplying the inequalities corresponding to odd values of $i$ by $-1$ and summing over the whole cycle, we get
\begin{align}
-\frac{p}{2}<\sum_{i=0}^{p-1}(-1)^ia_i(r_0-r_1+1)<\frac{p}{2}.
\end{align}
Observing that $\sum_{i=0}^{p-1}(-1)^ia_i\ge p$ for all cycles $(a_0,\ldots,a_{p-1})$ with members of alternating signs, we obtain the first statement.

Assume that $\tau_\vector{r}$ admits a cycle $(t,-t)$ with $t\in \Z\setminus \{0\}$. Plainly, $p=2$, and we may assume $t>0$. Inequality~\eqref{e4_5} with $i=0,1$ implies
$$
0\le tr_0-tr_1+t <1 \; \mbox{and}\; 0\le -tr_0+tr_1-t <1.
$$
Thus, $r_0-r_1+1=0$, as stated.
\end{proof}

\begin{lemma}
$ $
\begin{itemize}
\item If $-2<r_0\leq-1$ and $-1< r_1\leq0$, $(0,-1)$ is a cycle of $\tau_{(r_1,r_2)}$.
\item If $-1\le r_0< 0$ and $0\leq r_1<1 $, $(0,1)$ is a cycle of $\tau_{(r_1,r_2)}$.
\end{itemize}
Thus, none of these parameters $(r_0,r_1)$ are in $\mathcal{D}_2^{(*)}$.
\end{lemma}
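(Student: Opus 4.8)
The plan is to verify directly that each proposed two-periodic sequence is a genuine cycle of $\tau_\vector{r}$ by checking the defining recurrence. (Here $\vector{r}=(r_0,r_1)$; the subscript $(r_1,r_2)$ in the statement is evidently a typo for $(r_0,r_1)$.) Saying that $(0,-1)$ is a cycle means that the $2$-periodic integer sequence $0,-1,0,-1,\dots$ satisfies \eqref{Eqrek}, equivalently that the points $(0,-1)$ and $(-1,0)$ form a $2$-cycle under the map $\tau_\vector{r}$. Recalling from the definition that $\tau_\vector{r}(a_0,a_1)=(a_1,-\floor{r_0a_0+r_1a_1})$, the whole argument reduces to evaluating two floor expressions per case.

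For the first bullet I would compute $\tau_\vector{r}(0,-1)=(-1,-\floor{-r_1})$ and $\tau_\vector{r}(-1,0)=(0,-\floor{-r_0})$. The pair forms the desired cycle exactly when $\floor{-r_1}=0$ and $\floor{-r_0}=1$. The only step requiring any care is translating these floor equalities back into half-open parameter intervals: $\floor{-r_1}=0\iff 0\le -r_1<1\iff -1<r_1\le 0$, and $\floor{-r_0}=1\iff 1\le -r_0<2\iff -2<r_0\le -1$. These coincide exactly with the hypotheses, so the strictness of each inequality and the placement of the endpoints is dictated precisely by the floor function.

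The second bullet is handled identically: I would compute $\tau_\vector{r}(0,1)=(1,-\floor{r_1})$ and $\tau_\vector{r}(1,0)=(0,-\floor{r_0})$, so that $(0,1)$ is a cycle if and only if $\floor{r_1}=0$ and $\floor{r_0}=-1$, i.e.\ $0\le r_1<1$ and $-1\le r_0<0$, matching the second set of hypotheses.

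To conclude, since each exhibited cycle is nontrivial, every parameter satisfying the hypotheses admits a purely periodic orbit that does not end in the trivial cycle $(0)$; by the very definition of $\mathcal{D}_2^{(*)}$, such parameters cannot belong to it. I do not anticipate any genuine obstacle here: the statement is a direct verification, and the only thing to watch is the correct bookkeeping of the endpoints arising from the floor function.
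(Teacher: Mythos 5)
Your proposal is correct and is precisely the ``simple computation'' that the paper's proof consists of: a direct evaluation of the two floor expressions per cycle, with the half-open parameter intervals falling out of the floor equalities exactly as you describe (and you rightly identify $\tau_{(r_1,r_2)}$ as a typo for $\tau_{(r_0,r_1)}$). Nothing further is needed.
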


\begin{proof}
Simple computation.
\end{proof}

\subsection{Subregions of $\mathcal{D}_2^{(*)}$: direct approaches}

\begin{theorem} \label{Tsimpleorbit}
Assume that $\mathbf{r}=(r_0,r_1)$ is contained in one of the following sets.
\begin{itemize}
\item[(i)] $ \{  (r_0,r_1) \in \mathbb{R}^2 \mid  r_0>0, \, r_1\leq-2\sqrt{r_0}, \, r_0+r_1\geq0   \}. $
\item[(ii)] $ \{  (r_0,r_1) \in \mathbb{R}^2 \mid  r_0+r_1\leq-2, \, r_0-r_1\neq-1 \} \setminus \{  (r_0,r_1)\in \mathbb{R}^2 \mid   r_0 > -2, \, r_1 > -1  \}.$
\item[(iii)] $ \{  (r_0,r_1) \in \mathbb{R}^2 \mid   r_0+r_1\geq0, \, r_0<0, r_1\geq1  \} .$
\end{itemize}
Then, $\mathbf{r}\in\mathcal{D}_2^{(*)}$.
\end{theorem}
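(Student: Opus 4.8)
The plan is to show that in each of the three regions the map $\tau_{\mathbf r}$ has no nontrivial cycle; since every orbit that is ultimately periodic ends in a cycle and every cycle is bounded, this gives $\mathbf r\in\mathcal D_2^{(*)}$, and it lets me use Propositions \ref{Prop25} and \ref{Prop26}. First I would pin down the root geometry of $x^2+r_1x+r_0=(x-\alpha_1)(x-\alpha_2)$ using $r_0+r_1+1=(\alpha_1-1)(\alpha_2-1)$ and $r_0-r_1+1=(\alpha_1+1)(\alpha_2+1)$. In (i), $r_1\le-2\sqrt{r_0}$ makes the discriminant nonnegative, $r_0>0$ and $r_1<0$ make both roots real and positive, and $r_0+r_1\ge0$ gives $(\alpha_1-1)(\alpha_2-1)\ge1$, so in fact $\alpha_1,\alpha_2>1$. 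In (ii), $r_0+r_1\le-2$ gives $(\alpha_1-1)(\alpha_2-1)\le-1$, so the roots are real with exactly one above and one below $1$; removing the line $r_0-r_1=-1$ keeps a root off $-1$ and $r_0+r_1\le-2\neq-1$ keeps it off $+1$, so Proposition \ref{Prop26} is applicable. In (iii), $r_0<0$ forces real roots of opposite sign, and $r_1\ge1$ with $r_0+r_1\ge0$ places them as $0<\alpha_1<1$ and $\alpha_2<-1$.

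The model argument is region (i). Assuming a purely periodic nontrivial cycle $(a_n)$, Proposition \ref{Prop26} with the root $\alpha_2>1$ in the error position yields $a_{n+1}\le\alpha_1a_n$ for every cycle element. If some $a_{n_0}<0$, then $a_{n_0+1}\le\alpha_1a_{n_0}<a_{n_0}$ because $\alpha_1>1$, and inductively the cycle is strictly decreasing, contradicting periodicity; hence all $a_n\ge0$. Proposition \ref{Prop25} then gives $|a_n|\le 1/((\alpha_1-1)(\alpha_2-1))=1/(r_0+r_1+1)\le1$, so $a_n\in\{0,1\}$. Since $a_n=0$ forces $a_{n+1}\le\alpha_1\cdot0=0$ and thus $a_{n+1}=0$, the value $0$ is absorbing, so the cycle is constant $0$ or constant $1$; but $(1)$ is a cycle only for $-1\le r_0+r_1<0$ by Lemma \ref{LSameSign}, excluded here. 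The same scheme settles the part of (ii) with $\alpha_1\ge0$ (i.e.\ $r_0\ge0$): there the paired bounds $a_{n+1}\le\alpha_1a_n$ (from $\alpha_2>1$) and $a_{n+1}\ge\alpha_2a_n$ (from the nonnegative error root $\alpha_1$) are incompatible when $a_n>0$ since $\alpha_2a_n>\alpha_1a_n$, forcing all $a_n\le0$; again $0$ is absorbing and the surviving constant-sign cycle is killed through Lemma \ref{LSameSign} by $r_0+r_1\le-2$.

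For the genuinely oscillatory sub-cases, namely region (iii) (where there is no root exceeding $1$) and the portion of (ii) with $r_0<0$ (negative subdominant root, possibly of modulus $>1$), only one of the two bounds of Proposition \ref{Prop26} is sign-definite, e.g.\ $a_{n+1}\ge\alpha_2a_n$ in (iii), so the one-line monotonicity contradiction is unavailable. Here I would combine the sign-definite bound with the two-sided small bound coming from the other root: writing $b_n=a_{n+1}-\alpha_1a_n$ and $c_n=a_{n+1}-\alpha_2a_n$, the identity $c_n-b_n=(\alpha_1-\alpha_2)a_n$ expresses $a_n$ through two bounded quantities and confines every cycle element to an explicit bounded integer interval, sharper than the a priori bound of Proposition \ref{Prop25}. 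It then remains to eliminate the finitely many candidate periodic patterns in that interval: Lemma \ref{LSameSign} removes all constant-sign cycles because $r_0+r_1\notin(-2,0)$, Lemma \ref{LAltSign} removes $(t,-t)$ because $r_0-r_1\neq-1$ (and in (iii) automatically, since $r_0<0$ makes $r_0-r_1=-1$ impossible), and the remaining mixed, alternating, or zero-containing candidates are ruled out by direct substitution into the recursion \eqref{Eqrek}.

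The hard part is precisely this oscillatory regime: when a real root has modulus greater than $1$ but is negative, the expanding direction produces sign changes, so boundedness of a cycle is not contradicted by any single monotone estimate and one is forced into the confinement-plus-finite-elimination route, whose uniform treatment over the whole region is the tedious technical core. Making it airtight is exactly where the shape of the regions matters: the set-subtraction and $r_0-r_1\neq-1$ in (ii) keep the roots away from $\pm1$ (so Proposition \ref{Prop26} applies) and keep the candidate set free of the genuine $(t,-t)$ cycles living on $r_0-r_1=-1$, while the boundary inequalities $r_0+r_1\ge0$ in (i), (iii) and $r_0+r_1\le-2$ in (ii) are what remove the constant cycles permitted by Lemma \ref{LSameSign}.
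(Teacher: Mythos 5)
Your treatment of region (i) and of the $r_0\geq 0$ slice of region (ii) is correct and essentially coincides with the paper's own argument (the one-sided bound $a_{n+1}\leq\alpha_1 a_n$ from Proposition~\ref{Prop26}, monotonicity, then Lemma~\ref{LSameSign} to kill the surviving constant-sign cycle). The genuine gap lies in your plan for the oscillatory cases, i.e.\ region (iii) and the part of (ii) with $r_0<0$, which is exactly where the paper's proof has to work hardest. Your proposed mechanism --- confine cycle elements to ``an explicit bounded integer interval'' via $b_n=a_{n+1}-\alpha_1a_n$, $c_n=a_{n+1}-\alpha_2a_n$, then eliminate ``finitely many candidate patterns \dots by direct substitution'' --- fails for two reasons. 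First, the confinement is not uniform over these (unbounded) regions: your identity $c_n-b_n=(\alpha_1-\alpha_2)a_n$ yields nothing better in order of magnitude than the bound $\bigl(\abs{\abs{\alpha_1}-1}\,\abs{\abs{\alpha_2}-1}\bigr)^{-1}$ of Proposition~\ref{Prop25}, and in region (ii) one has $(\alpha_1+1)(\alpha_2+1)=r_0-r_1+1\to 0$ as $(r_0,r_1)$ approaches the excluded line $r_0-r_1=-1$, which is removed only as a line, not with a neighborhood. This blow-up is unavoidable: on that line $\tau_{\mathbf{r}}$ has the cycles $(t,-t)$ for \emph{every} $t\in\Z$ (Lemma~\ref{LAltSign}), so no bound on cycle elements can hold on any neighborhood of it, and your candidate interval is unbounded over the region; the same divergence occurs in (iii) near the corner $(r_0,r_1)\to(0,1)$, where $\alpha_2\to-1$. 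Second, even where the interval is bounded, the ``candidates'' are not a fixed finite list: which integer patterns satisfy \eqref{Eqrek} varies with the continuum parameter $(r_0,r_1)$, and Lemma~\ref{LAltSign} forbids only the exact pattern $(t,-t)$ --- general alternating cycles of varying magnitudes remain consistent with its necessary condition $\abs{r_0-r_1+1}<\frac{1}{2}$ precisely in the dangerous strip. So ``direct substitution'' is not a finite verification; it is the whole remaining problem.

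What the paper does at these spots is uniform in the parameter and is the content you are missing. In (ii) with $r_0<0$ it first shows a cycle contains no zero and must strictly alternate in sign, then evaluates $e_0=a_0r_0+a_1r_1+a_2$ at an extremal element of the cycle via decompositions such as $a_0(r_0-r_1+1)+(a_0+a_1)r_1+(a_2-a_0)$ (cases with $r_1\leq 0$) or $a_0(r_0+r_1+1)+r_1(a_1-a_0)+(a_2-a_0)$ (case $r_1>0$), showing each summand nonnegative and the total at least $1$, contradicting $e_0<1$; note that the first summand only needs $r_0-r_1+1>0$, \emph{however small}, which is exactly why excluding the bare line suffices. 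In (iii) the paper upgrades $a_{n+1}\geq\alpha_2 a_n$ to $a_{n+1}\leq-a_n$ by feeding $r_0a_{n-1}\geq 0$ and $r_1>1$ into \eqref{Eqrek}, so that $(\abs{a_n})$ is nondecreasing with a strict jump after each nonpositive term --- impossible for a periodic orbit. To repair your proof you need extremal-element and sign-pattern arguments of this kind; root-modulus bounds plus a finite search cannot close these cases.
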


\begin{proof}
Throughout the entire proof, let us assume w.l.o.g. that $\abs{\alpha_1}\leq\abs{\alpha_2}$. Notice that in all cases $\alpha_1$ and $\alpha_2$ are real.

We will prove the theorem by contradiction. Thus, we assume that there is a non-trivial cycle $(a_0,\ldots,a_{p-1})$ of $\tau_\vector{r}$.

{Proof of (i):} The assumptions on $r_0$ and $r_1$ in (i) imply that $1<\alpha_1\leq\alpha_2$ (see Figure~\ref{fig:overview}). Inequality \eqref{eqProp262} implies that $a_{n+1}\le \alpha_1a_n$ for all $n\in\Nz$. If $a_j\leq0$ for some $j\in\set{0,\ldots,p-1}$, then, $(a_n)$ is a strictly decreasing sequence of negative numbers, which is impossible. Thus, $a_j>0$ for all $j\in\set{0,\ldots,p-1}$. However, in this case we have $r_0+r_1<0$ by Lemma~\ref{LSameSign}, which contradicts our assumption.

\medskip
{Proof of (ii):} If $r_0=0$, then $0\le r_1a_{n}+ a_{n+1}<1$ with $r_1\le -2$ and the result follows immediately. We divide up the remaining region into four subregions.

Case (iia): Assume $r_0+r_1\leq-2$ and $r_0>0$. In this case we have $0<\alpha_1<1<\alpha_2$. As in (i) we can conclude that $a_{n+1}\le \alpha_1a_n$ for all $n\in\Nz$. If $a_0\geq0$, then $(a_n)$ is a strictly decreasing sequence, which is impossible. Hence, $a_0<0$. Then all $a_n$ are negative, however, and we can apply Lemma~\ref{LSameSign} again to get a contradiction to our assumption $r_0+r_1\leq-2$.

Case (iib): Assume $r_0+r_1\leq-2$, and $r_0<0$, $r_1\le -1$, and $r_0-r_1+1>0$. Here we have $-1<\alpha_1<0$ and $\alpha_2>1$. By Lemma~\ref{LSameSign} the period $(a_0,\ldots,a_{p-1})$ must have both negative and non-negative members. By $\alpha_2>1$, we have $a_{n+1}\le \alpha_1a_n$ for all $n\in\Nz$, as in (i). Thus, if $a_n\geq0$, then, $a_{n+1}<0$.

Now we exclude the possibility of $a_n=0$ for some $n$. Supposing the contrary we may assume w.l.o.g. that $a_0=0$. Then, $a_1\le \alpha_1a_0=0$, but $a_1=0$ is excluded, because otherwise $(a_n)$ would be the zero sequence. Thus, $a_1<0$ and we get $a_2<-r_1a_1+1\le a_1+1\leq0$ using \eqref{Eqrek}. Repeated application of \eqref{Eqrek} shows that all members of the period are negative, which is impossible.

Thus, $a_n\neq0$ for all $n\in\Nz$. This implies that consecutive members of the period have different signs: if we assume to the contrary that there are two consecutive members which have the same sign (which must be $-1$, since we have already shown $a_n\geq0\Rightarrow a_{n+1}<0$), say $a_n,a_{n+1}<0$, then $a_{n+2}\leq a_{n+1}$ by \eqref{Eqrek} and $a_{n+2}\neq a_{n+1}$ since the period must have non-negative members, i.e., $a_{n+2}<a_{n+1}$, which is a contradiction. Hence, $p$ is even and we may assume w.l.o.g. that $0<a_0\leq a_{2l}$ for all $l\in\set{0,\ldots,\frac{p}{2}-1}$. As a result, we get
\begin{align}
\label{ESeqE}
a_0r_0+a_1r_1+a_2=a_0(r_0-r_1+1)+(a_0+a_1)r_1+(a_2-a_0),
\end{align}
and since $r_0-r_1+1>0$ and $a_2\geq a_0$, the first and the third summands are non-negative. Furthermore, we have $a_2\le \alpha_1a_1<-a_1$, which implies $a_0+a_1\le a_2+a_1<0$. Altogether, we get
\begin{align}
a_0r_0+a_1r_1+a_2\geq-(a_0+a_1)\geq1,
\end{align}
which is a contradiction.

Case (iic): Assume that $r_0\leq-2$, $r_1\le 0$ and $r_0-r_1<-1$. This implies $\alpha_1<-1$ and $\alpha_2>1$. As before, the period $(a_0,\ldots,a_{p-1})$ has to have both negative and non-negative members by Lemma~\ref{LSameSign}. Furthermore, $a_{n+1}\le\alpha_1a_n$ for all $n\in\Nz$. Thus, the largest element of the period (in absolute value) must be negative. We may assume w.l.o.g. that this element is $a_0$ and, hence, $a_0\le a_j<-a_0$ for all $j\not=0$. Then $a_0+a_1<0$ and, thus, all summands of \eqref{ESeqE} are non-negative.

If $a_2\neq a_0$ or $r_1\leq-1$, we get $a_0r_0+a_1r_1+a_2\geq1$, which is a contradiction. Thus, $a_0=a_2$ and $-1<r_1\le 0$. Hence,
\begin{align}
a_0r_0+a_1r_1+a_2=a_0(r_0+1)+a_1r_1\geq-a_0+a_1r_1\ge -a_0-\abs{a_1},
\end{align}
where we first used $r_0\leq-2$, and then $|r_1|<1$. The RHS of the last inequality is at least one which is absurd, except when  $-a_0-\abs{a_1}=0$ or $-a_0=|a_1|$. The case $a_1>0$ is impossible because $a_0=a_2\le \alpha_1a_1<-a_1$. Hence $a_1<0$, but then $a_1=a_0=a_2$ and the cycle is constant, contradicting Lemma~\ref{LSameSign}.

Case (iid): Assume that $r_0+r_1\leq-2$ and $r_1>0$. Then $\alpha_1>1$, $\alpha_2<-1$, and $\alpha_1<-\alpha_2$. By interchanging $\alpha_1$ and $\alpha_2$ we obtain $a_{n+1} \le \alpha_2 a_n$, as in (i).

By setting $A\ce\max \{|a_j|, j=0,\dots,p-1\}$ we can show as in case (iic) that if $|a_j|=A$, then $a_j<0$. We may assume w.l.o.g. that $|a_0|=A$. As a result
$$
e_0 = a_0 r_0 + a_1 r_1 + a_2 = a_0(r_0+r_1+1) + r_1(a_1-a_0) + a_2-a_0.
$$
At the same time, $r_0+r_1+1\le -1$. Thus, the first summand is at least $-a_0\ge 1$. As $|a_0|=-a_0\ge |a_j|$ for all $j\ge 1$, we have $a_1-a_0, a_2-a_0\ge 0$. Finally, since $r_1>0$, we conclude that $e_0\ge 1$, which is a contradiction.

\medskip
{Proof of (iii):} Assume $r_0+r_1\ge 0$ and $r_0< 0$, $r_1>1$. Then $0<\alpha_1<1, \alpha_2< -1$. By interchanging $\alpha_1$ and $\alpha_2$, inequality \eqref{eqProp261} implies $a_{n+1} \ge\alpha_2 a_{n}$. By Lemma~\ref{LSameSign} we know that the sequence $(a_n)$ has both non-positive and positive members. Thus, if $a_{n-1}\le 0$, then $a_n > |a_{n-1}|\ge 0$. It also follows that
$$
a_{n+1}< -r_0 a_{n-1} -r_1a_n + 1 < -a_n +(1-r_0 a_{n-1}).
$$
As $1-r_0 a_{n-1}< 1$ and $a_n,a_{n+1} \in \mathbb{Z}$, we get $a_{n+1}\le -a_n$. Hence, $(|a_n|)$ is monotonically increasing and it has a jump when $a_n\le 0$. This is impossible with a periodic sequence.
\end{proof}

The proof of the next result, which characterizes a further region that is free from non-trivial cycles, is divided into several lemmas and will constitute the remaining part of the present section.

\begin{theorem} \label{Tnotsosimpleorbit}
$\{(r_0,r_1) \in \mathbb{R}^2\mid r_0-r_1 < -1,\, r_0\geq0 \} \subset \mathcal{D}_2^{(*)}$.
\end{theorem}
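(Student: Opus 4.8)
The plan is to show that $\tau_\vector{r}$ has no nontrivial cycle for every $\vector{r}=(r_0,r_1)$ with $r_0\ge 0$ and $r_0-r_1<-1$; by the definition of $\mathcal{D}_2^{(*)}$ this is exactly what is needed. The starting point is to normalise the root configuration. Writing $x^2+r_1x+r_0=(x-\alpha_1)(x-\alpha_2)$, one uses the identity $r_0-r_1+1=(\alpha_1+1)(\alpha_2+1)$. If the roots were complex conjugates this quantity would equal $\abs{\alpha_1+1}^2\ge 0$, contradicting $r_0-r_1+1<0$; hence the roots are real. Since moreover $\alpha_1\alpha_2=r_0\ge 0$ and $\alpha_1+\alpha_2=-r_1<-1$, both roots are $\le 0$, and $(\alpha_1+1)(\alpha_2+1)<0$ forces exactly one of them below $-1$. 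After relabelling I may assume $-1<\alpha_1\le 0$ and $\alpha_2<-1$; I set $t:=-\alpha_1\in[0,1)$ and $s:=-\alpha_2>1$, so that $r_0=st$, $r_1=s+t$, and $(s-1)(1-t)=r_1-r_0-1>0$.

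Next I would dispose of the easy structural constraints. Because $r_0+r_1=s+t+st>1>0$, Lemma~\ref{LSameSign} shows that no nontrivial cycle of $\tau_\vector{r}$ can have all its elements of one sign. Assuming a nontrivial cycle $(a_0,\dots,a_{p-1})$ exists, Proposition~\ref{Prop26} in the case $\alpha_2<-1$, applied to $b_n:=a_{n+1}-\alpha_1a_n=a_{n+1}+ta_n$, gives the tight band $\frac{-1}{s^2-1}<b_n<\frac{s}{s^2-1}$, and the same proposition with the roles of $\alpha_1,\alpha_2$ interchanged bounds $c_n:=a_{n+1}+sa_n$ inside $\bigl(\frac{-t}{1-t^2},\frac{1}{1-t^2}\bigr)$ (the degenerate case $t=0$, i.e. $r_0=0$, collapses to a one-dimensional recursion and is handled directly). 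From $a_n=\frac{c_n-b_n}{s-t}$ these two bands yield the explicit two-sided estimate $\frac{-(r_0+1)}{(r_1-r_0-1)(r_0+r_1+1)}<a_n<\frac{r_1}{(r_1-r_0-1)(r_0+r_1+1)}$; equivalently, writing the recursion in its expanding form $a_{n+1}=c_n-sa_n$, once $\abs{a_n}$ exceeds $\frac{1}{(s-1)(1-t^2)}$ the magnitudes grow strictly, which is impossible in a cycle. In particular every cycle element lies in a finite set.

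With boundedness in hand, the region splits. When $(r_1-r_0-1)(r_0+r_1+1)$ exceeds both $r_1$ and $r_0+1$ the displayed interval contains no nonzero integer, so $a_n\equiv 0$ and the cycle is trivial; this settles the whole ``bulk'' of the region and leaves only a bounded strip adjacent to the line $r_1=r_0+1$, which for large $r_0$ is asymptotic to $r_1=r_0+\tfrac32$ (i.e. $s$ close to $1$). In this strip Lemma~\ref{LAltSign} is essentially inconclusive, since $\abs{r_0-r_1+1}<\tfrac12$ is exactly the range in which alternating cycles are not excluded a priori, so the heart of the proof is to rule out cycles here directly. The mechanism I would exploit is that $a_{n+1}=-ta_n+b_n$ with $\abs{b_n}$ bounded forces two consecutive \emph{large} elements to have opposite signs, so a cycle must be ``eventually alternating'' apart from a bounded set of small elements; tracking an extremal element (and the value $0$ whenever it occurs) through the recursion \eqref{Eqrek}, and splitting further according to the size of $s$ relative to the thresholds $\sqrt2$ and $\tfrac{1+\sqrt5}2$ at which the band for $b_n$ crosses the integers $\pm1$, one should force either strictly increasing magnitudes or collapse to the zero sequence. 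Organising this case distinction is precisely what requires the several lemmas announced after the statement.

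I expect the main obstacle to be exactly this near-boundary strip. The magnitude estimate degenerates as $r_1\to r_0+1$, so no uniform size bound rules out cycles there, and the available global identities, namely $0\le(1+r_0+r_1)\sum a_n<p$ and $(r_0-r_1+1)\sum(-1)^na_n=\sum(-1)^ne_n$, only control period-weighted sums rather than the individual $a_n$. Consequently the exclusion of long, mixed-sign cycles cannot come from a single clean inequality but must be carried out by a careful local analysis of the intersection of the two bands for $b_n$ and $c_n$ together with the integrality of the $a_n$.
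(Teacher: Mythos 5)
Your opening reductions are correct and verifiable: the identity $r_0-r_1+1=(1+\alpha_1)(1+\alpha_2)$ does force real roots with $-1<\alpha_1\le 0$ and $\alpha_2<-1$, and combining the two bands from Proposition~\ref{Prop26} for $b_n=a_{n+1}-\alpha_1a_n$ and $c_n=a_{n+1}-\alpha_2a_n$ indeed yields, after the algebra you indicate, $\frac{-(r_0+1)}{D}<a_n<\frac{r_1}{D}$ with $D=(r_1-r_0-1)(r_0+r_1+1)$, which excludes nontrivial cycles whenever $D\ge\max\{r_1,\,r_0+1\}$. But at that point your proof stops being a proof. The uncovered set is not a bounded region: it is a neighborhood of the line $r_1=r_0+1$ whose width tends to $\tfrac12$ as $r_0\to\infty$, so it persists for \emph{every} $r_0\ge 0$ and cannot be finished off by any finite check. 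On exactly this strip --- which is the hard part of the theorem, since it abuts the line $r_0-r_1=-1$ on which genuine cycles $(t,-t)$ exist by Lemma~\ref{LAltSign} --- you offer only a mechanism (``eventually alternating; track an extremal element; split at the thresholds $\sqrt2$ and $\tfrac{1+\sqrt5}{2}$'') and explicitly defer the case analysis to lemmas you do not supply, while conceding that no single inequality will do. That is a genuine gap: the statement remains unproved on an unbounded open subset of the claimed region. (Even your ``handled directly'' aside for $r_0=0$ hides the same issue when $1<r_1<2$, where cycle elements are only bounded by $\frac{1}{r_1-1}$ and one again needs an alternation-and-growth argument.)

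It is instructive to compare with how the paper closes this gap: it uses no size bounds at all. It partitions $\Z^2$ into seven sectors $S_0,\dots,S_6$ defined purely by sign patterns and the comparison of $\abs{a_{n+1}}$ with $\abs{a_n}$, and Lemmas~\ref{J5}--\ref{J7} pin down the transition graph of $\tau_{\mathbf r}$ on these sectors using only the inequalities $r_1>r_0+1$ and $r_0\ge 0$. The graph has three terminal cycles: $S_0$ (the fixed point $\mathbf 0$), $S_1\leftrightarrow S_2$, where the proof of Lemma~\ref{J5} gives the strict chain $0<a_1\le -a_2<a_3\le -a_4<\cdots$, so orbits are unbounded, and $S_5\leftrightarrow S_6$, where $\abs{a_n}$ strictly decreases, which a periodic integer sequence cannot sustain. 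Because only $r_1>r_0+1$ enters, the argument is uniform in $\mathbf r$ and does not degenerate as $r_1\downarrow r_0+1$; the blow-up of the Proposition~\ref{Prop26} bands that you correctly identified as the obstacle is simply never encountered. Your key heuristic --- consecutive large elements alternate in sign with growing magnitude --- is precisely what Lemma~\ref{J5} makes exact in the form ``$a_1\ge 0$, $a_2\le-a_1$, $(a_1,a_2)\neq(0,0)$ implies $a_3>-a_2$''; had you pushed that one sharp sector inequality instead of the quantitative bands, neither the bulk/strip split nor the speculative thresholds would have been needed.
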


In the remaining part of this section, we assume that $r_0-r_1 < -1$ and $r_0\geq 0$. It follows that $-1<\alpha_1\leq0, \alpha_2< -1$. We define
\begin{eqnarray*}
  S_0 &\ce& \{(0,0)\}, \\
  S_1 &\ce& \{(a_1,a_2)\mid a_1\ge 0, a_2\le -a_1\}\setminus\{(0,0)\} \\
  S_2 &\ce& \{(a_1,a_2)\mid a_1\le 0, a_2\ge -a_1\}\setminus\{(0,0)\} \\
  S_3 &\ce& \{(a_1,a_2)\mid a_1> 0, a_2\ge 0\} \\
  S_4 &\ce& \{(a_1,a_2)\mid a_1< 0, a_2\le 0\} \\
  S_5 &\ce& \{(a_1,a_2)\mid a_1\ge 0, -a_1<a_2< 0\} \\
  S_6 &\ce& \{(a_1,a_2)\mid a_1< 0, 0<a_2<-a_1\}.
\end{eqnarray*}
Clearly, $S_0,\dots,S_6$ form a partition of $\mathbb{Z}^2$.

\begin{lemma} \label{J5}
  We have $\tau_{\bf r} (S_1) \subset S_2$ and $\tau_{\bf r} (S_2) \subset S_1$.
\end{lemma}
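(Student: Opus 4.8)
The plan is to verify both inclusions by a direct sign-and-size analysis of a single application of $\tau_{\mathbf{r}}$, using only the standing hypotheses $r_0\ge 0$ and $r_0-r_1<-1$ (equivalently $r_0-r_1+1<0$); the root picture $-1<\alpha_1\le 0,\ \alpha_2<-1$ is not needed here. First I would fix a point $(a_1,a_2)\in S_1$, so that $a_1\ge 0$, $a_2\le -a_1$, and $(a_1,a_2)\neq(0,0)$. Writing $a_3$ for the new last coordinate, the defining relation \eqref{Eqrek} gives $a_3=e_1-r_1a_2-r_0a_1$ with $e_1\in[0,1)$, and $\tau_{\mathbf{r}}(a_1,a_2)=(a_2,a_3)$. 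Membership $(a_2,a_3)\in S_2$ then amounts to the three conditions $a_2\le 0$, $a_3\ge -a_2$, and $(a_2,a_3)\neq(0,0)$.

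The condition $a_2\le 0$ is immediate from $a_2\le -a_1\le 0$, and in fact $a_2<0$: if $a_2=0$, then $0\le -a_1$ forces $a_1=0$, contradicting $(a_1,a_2)\neq(0,0)$. This already yields $(a_2,a_3)\neq(0,0)$. For the remaining inequality I would estimate $a_3+a_2$ from below, using $a_1\le -a_2$ together with $r_0\ge 0$ to replace $-r_0a_1$ by $r_0a_2$:
$$
a_3+a_2=-r_0a_1+(1-r_1)a_2+e_1\ge (r_0-r_1+1)a_2+e_1\ge 0,
$$
since $a_2\le 0$ and $r_0-r_1+1<0$ make the first summand nonnegative while $e_1\ge 0$. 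Hence $a_3\ge -a_2$ and $(a_2,a_3)\in S_2$, proving $\tau_{\mathbf{r}}(S_1)\subset S_2$.

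The inclusion $\tau_{\mathbf{r}}(S_2)\subset S_1$ is then handled by the mirror-image argument: for $(a_1,a_2)\in S_2$ one has $a_2\ge 0$, in fact $a_2>0$ (so $(a_2,a_3)\neq(0,0)$), and the same manipulation, now using $a_1\ge -a_2$, yields the upper bound
$$
a_3+a_2=-r_0a_1+(1-r_1)a_2+e_1\le (r_0-r_1+1)a_2+e_1<1.
$$
The single delicate point, and essentially the only place where the two directions differ, is that the error term lives in the \emph{half-open} interval $[0,1)$: in the first inclusion the bound $e_1\ge 0$ closes the estimate directly, whereas here the computation only gives $a_3+a_2<1$. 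Since $a_3+a_2$ is an integer, this forces $a_3+a_2\le 0$, i.e. $a_3\le -a_2$, so $(a_2,a_3)\in S_1$. I expect this rounding step to be the one subtlety; everything else is routine sign bookkeeping driven by $r_0\ge 0$ and $r_0-r_1+1<0$.
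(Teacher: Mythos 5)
Your proof is correct and takes essentially the same route as the paper: a one-step estimate of $a_3+a_2$ using $r_1>r_0+1$, the defining inequalities $a_1\le -a_2$ (resp.\ $a_1\ge -a_2$), and the integrality rounding to pass from $a_3+a_2<1$ to $a_3\le -a_2$ in the $S_2\to S_1$ direction. One small remark: the paper's version of the first inclusion records the \emph{strict} inequality $a_3>-a_2$, which is invoked later in case (b) of the proof of Theorem~\ref{Tnotsosimpleorbit} to force $|a_k|\to\infty$; your computation delivers this too, since you showed $a_2<0$ strictly, making the summand $(r_0-r_1+1)a_2$ strictly positive.
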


\begin{proof}
  Let $(a_1,a_2) \in S_1$ and $\tau_{\bf r} (a_1,a_2) = (a_2,a_3)$. Then $a_3 = -\lfloor r_0a_1 + r_1a_2 \rfloor$. From the conditions on $(r_0,r_1)$ and $(a_1,a_2)$ we have
  \begin{eqnarray*}
    a_3 &\ge& -r_0a_1 -r_1a_2 \\
     &>& -r_0a_1-(r_0+1)a_2 ;\  (\mbox{since}\; r_0+1<r_1 \; \mbox{and} \; a_2<0) \\
     &\ge& a_2r_0-(r_0+1)a_2 ;\  (\mbox{since}\; a_2\le -a_1)\\
     &=& -a_2.
  \end{eqnarray*}
  We have proved $a_3>-a_2$, which together with $a_2<0$ implies $(a_2,a_3)\in S_2$, hence, the first inclusion is proved.

  In order to prove the second inclusion, let $a_1,a_2,a_3$ such that $(a_1,a_2) \in S_2$, and $a_3 = -\lfloor r_0a_1 + r_1a_2 \rfloor$. As a consequence, we get
  \begin{eqnarray*}
    a_3 &<& -r_0a_1 -r_1a_2 +1\\
     &\le& -r_0a_1-(r_0+1)a_2 +1;\  (\mbox{since}\; r_0+1<r_1 \; \mbox{and} \; a_2\ge 0) \\
     &\le& a_2r_0-(r_0+1)a_2 +1;\  (\mbox{since}\; a_2\ge -a_1)\\
     &=& -a_2+1.
  \end{eqnarray*}
  Thus, $a_3\le -a_2$, which together with $a_2>0$ implies $(a_2,a_3) \in S_1$.

\end{proof}

\begin{lemma} \label{J6}
  We have $\tau_{\bf r} (S_3) \subset S_1 \cup S_0$ and $\tau_{\bf r} (S_4) \subset S_2\cup S_0 $.
\end{lemma}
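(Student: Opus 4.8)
The plan is to mirror the proof of Lemma~\ref{J5}: both inclusions reduce to a single sign estimate on the new entry $a_3=-\lfloor r_0a_1+r_1a_2\rfloor$, using the standing hypotheses $r_0\ge 0$ and $r_0-r_1<-1$. The latter gives $r_1>r_0+1\ge 1$, so in particular $r_1>1$; this is the one inequality that does all the work. First I would record the convenient reformulation $S_1\cup S_0=\{(b_1,b_2)\mid b_1\ge 0,\ b_2\le -b_1\}$ and $S_2\cup S_0=\{(b_1,b_2)\mid b_1\le 0,\ b_2\ge -b_1\}$, so that each target membership amounts to a single inequality on $a_3$, the sign of the first new coordinate $a_2$ being already fixed by membership in $S_3$ resp.\ $S_4$.

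For the first inclusion, take $(a_1,a_2)\in S_3$, so $a_1>0$ and $a_2\ge 0$, and write $\tau_{\bf r}(a_1,a_2)=(a_2,a_3)$. Since $a_2\ge 0$, it remains to show $a_3\le -a_2$. Using $\lfloor x\rfloor>x-1$ I would estimate
\[
a_3<-r_0a_1-r_1a_2+1\le -a_2+1,
\]
where the second step uses $-r_0a_1\le 0$ (from $r_0\ge 0$, $a_1>0$) and $-r_1a_2\le -a_2$ (from $(r_1-1)a_2\ge 0$). As $a_2,a_3\in\Z$, the strict inequality $a_3<-a_2+1$ upgrades to $a_3\le -a_2$, whence $(a_2,a_3)\in S_1\cup S_0$.

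The second inclusion is the sign-reversed twin. For $(a_1,a_2)\in S_4$ we have $a_1<0$, $a_2\le 0$, and now I would use $\lfloor x\rfloor\le x$ to get
\[
a_3\ge -r_0a_1-r_1a_2\ge -a_2,
\]
using $-r_0a_1\ge 0$ (from $r_0\ge 0$, $a_1<0$) and $-r_1a_2\ge -a_2$ (from $(r_1-1)a_2\le 0$). Together with $a_2\le 0$ this gives $(a_2,a_3)\in S_2\cup S_0$.

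There is no real obstacle here: the content is entirely the bookkeeping of signs driven by $r_0\ge 0$ and $r_1>1$. The only mild subtleties are (i) the appeal to integrality that turns the strict bound into $a_3\le -a_2$ in the first case, and (ii) checking that the boundary cases $a_2=0$ are absorbed correctly — they land on the diagonal $b_2=-b_1$, i.e.\ in $S_1$, $S_2$, or $S_0$, which is precisely why $S_0$ must be adjoined to the targets.
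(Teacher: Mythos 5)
Your proof is correct and takes essentially the same route as the paper's: a direct floor estimate on $a_3=-\lfloor r_0a_1+r_1a_2\rfloor$ driven by the standing hypotheses $r_0\ge 0$ and $r_1>r_0+1\ge 1$. If anything, your version is slightly more careful on the boundary: the paper asserts the strict bound $a_3<-a_2$ for the $S_3$ case (which can fail, e.g.\ $r_1=1.5$, $r_0=0.1$, $(a_1,a_2)=(1,1)$ gives $a_3=-1=-a_2$), whereas your non-strict bound $a_3\le -a_2$ obtained from integrality, together with the reformulation $S_1\cup S_0=\set{(b_1,b_2)\mid b_1\ge 0,\ b_2\le -b_1}$, absorbs all diagonal cases correctly.
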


\begin{proof}
Let $(a_1,a_2) \in S_3$, i.e., $a_1>0, a_2\ge 0$. Then $a_3 = -\lfloor r_0a_1 + r_1a_2 \rfloor< -a_2$, which means $(a_2,a_3) = \tau_{\bf r} (a_1,a_2) \in S_1$, and the first assertion is proved.

Now let $(a_1,a_2)\in S_4$, which implies $a_1<0,a_2\le 0$. Set $a_3 = -\lfloor r_0a_1 + r_1a_2 \rfloor$. If $a_2<0$, then $a_3>-a_2$, i.e., $(a_2,a_3)\in S_2$. If, however, $a_2=0$, then $a_3\ge 0$, but $a_3=0$ only if $r_0=0$, which is excluded. Thus, $(a_2,a_3) \in S_2$.
\end{proof}

\begin{lemma} \label{J7}
$\tau_{\bf r} (S_5) \subset S_2\cup S_4\cup S_6$ and $\tau_{\bf r} (S_6) \subset S_1 \cup S_3 \cup S_5$.
\end{lemma}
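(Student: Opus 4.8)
Let me figure out the sets $S_5$ and $S_6$ and the conditions on the parameters.

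We have $r_0 - r_1 < -1$ and $r_0 \geq 0$, so $r_1 > r_0 + 1 \geq 1$. The roots satisfy $-1 < \alpha_1 \leq 0$ and $\alpha_2 < -1$. The sets are:
- $S_5 = \{(a_1, a_2) : a_1 \geq 0, -a_1 < a_2 < 0\}$
- $S_6 = \{(a_1, a_2) : a_1 < 0, 0 < a_2 < -a_1\}$

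I need to show $\tau_{\mathbf{r}}(S_5) \subset S_2 \cup S_4 \cup S_6$ and $\tau_{\mathbf{r}}(S_6) \subset S_1 \cup S_3 \cup S_5$.
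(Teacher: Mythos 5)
Your proposal stops exactly where the proof should begin: after restating the definitions of $S_5$ and $S_6$ and the parameter constraints, you write that you need to show the two inclusions and then give no argument at all, so nothing is actually proved. This is a genuine gap --- the entire proof body is missing.

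What is missing is, fortunately, a single observation, and it is all the paper uses: since $\tau_{\mathbf{r}}(a_1,a_2)=(a_2,a_3)$ with $a_3=-\lfloor r_0a_1+r_1a_2\rfloor$, the first coordinate of the image point is just $a_2$. Because $S_0,\ldots,S_6$ partition $\mathbb{Z}^2$ and the only classes whose points can have a strictly negative first coordinate are $S_2$, $S_4$ and $S_6$ (membership in $S_0$, $S_1$, $S_3$, $S_5$ forces the first coordinate to be $\geq 0$), any $(a_1,a_2)\in S_5$ --- which satisfies $a_2<0$ --- has $\tau_{\mathbf{r}}(a_1,a_2)\in S_2\cup S_4\cup S_6$ regardless of the value of $a_3$. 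Symmetrically, $(a_1,a_2)\in S_6$ forces $a_2>0$, and the only classes admitting a strictly positive first coordinate are $S_1$, $S_3$ and $S_5$, so the image lies in $S_1\cup S_3\cup S_5$. Note that, unlike in Lemmas~\ref{J5} and~\ref{J6}, no estimate on $a_3$ and no use of the hypotheses $r_0-r_1<-1$, $r_0\geq 0$ is needed here; if you had attempted to bound $a_3$ you would have been doing unnecessary work, but as it stands you have done no work at all, and the proposal cannot be accepted as a proof.
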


\begin{proof}
Let $(a_1,a_2) \in S_5$ and $\tau_{\bf r} (a_1,a_2) = (a_2,a_3)$. Since $a_2 < 0$, the pair $(a_2,a_3)$ belongs to $S_2\cup S_4\cup S_6$.

If $(a_1,a_2)\in S_6$, then $a_2>0$ and $\tau_{\bf r} (a_1,a_2)$ belongs to $S_1 \cup S_3 \cup S_5$.
\end{proof}

Now we are in the position to prove Theorem \ref{Tnotsosimpleorbit}. Lemmas \ref{J5}, \ref{J6} and \ref{J7} show that the orbits of $\tau_{\bf r}$ are all governed by the following graph.

\begin{figure}[H]
\centering
\includegraphics[width=\textwidth]{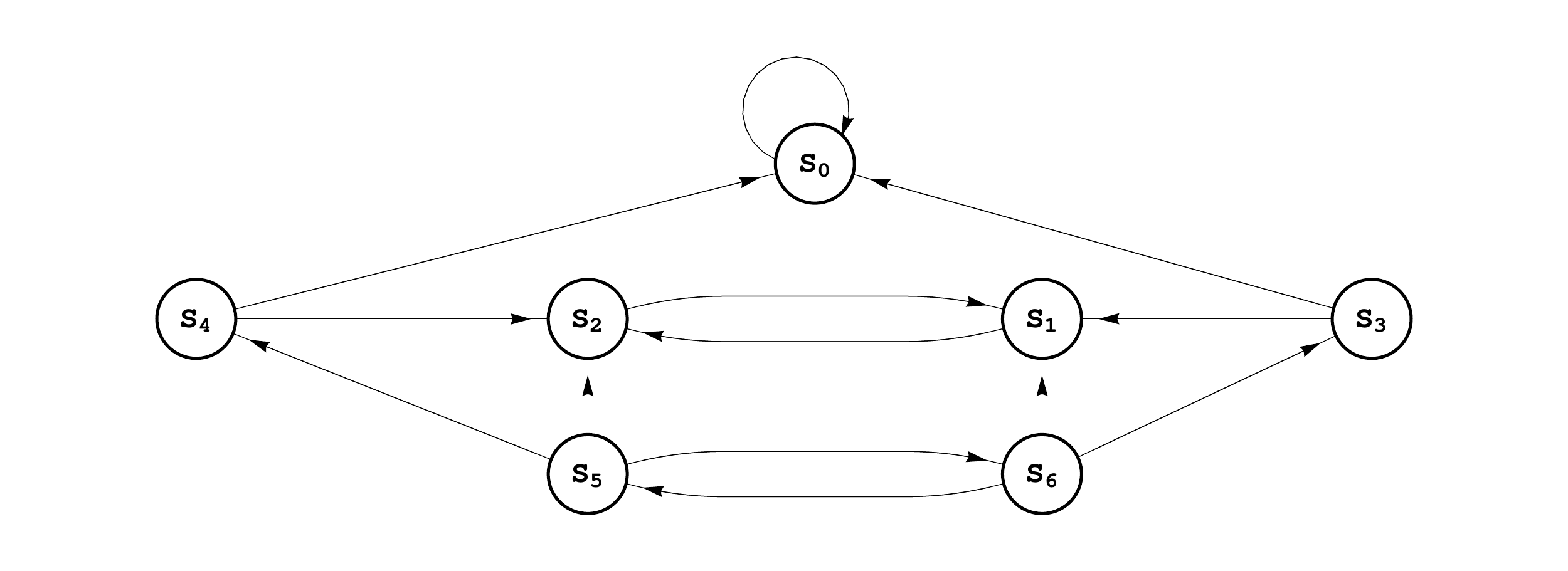}
\caption{Illustration of the action of $\tau_{\mathbf{r}}$ for the proof of Theorem~\ref{Tnotsosimpleorbit}.}
\end{figure}

Each orbit has to end up in one of the following cycles of the graph:
\begin{itemize}
  \item[(a)] in $S_0 \rightarrow S_0$,
  \item[(b)] in $S_1 \rightarrow S_2 \rightarrow S_1$,
  \item[(c)] or in $S_5 \rightarrow S_6 \rightarrow S_5$.
\end{itemize}
In case $(a)$ we have already proven the desired result.

In case $(b)$, as soon as we reach the cycle $S_1 \rightarrow S_2 \rightarrow S_1$, we have
$$
0< a_1 \le -a_2 < a_3 \le -a_4 < \dots
$$
according to the proof of Lemma \ref{J5}. Thus, $|a_k| \rightarrow \infty$, and $\tau_{\bf r}$ has no cycle for this orbit.

In case $(c)$ we have
$$
|a_k| > |a_{k+1}| > |a_{k+2}| \dots
$$
However, since $a_k$ is finite, this sequence must stop. Therefore, no orbit can end up in this cycle. \hfill $\Box$

$ $\\
So far, we have only treated a small part of the quadrant $\{(r_0,r_1) \in \mathbb{R}^2 \,:\, -r_0 < r_1< r_0+1\}$. The points strictly inside the triangle with vertices $(-1,0), (1,-2), (1,2)$ define contractive mappings. Thus, these points are part of the classification problem: which of them belong to the set $\mathcal{D}_2^{(0)}$? The points between the parallel lines $r_1=-r_0-1$ and $r_1=-r_0$ have the finite orbit $(1)$. Finally, by Theorem \ref{Tsimpleorbit}~(i), the mappings corresponding to points of the region $r_0>0$, $r_1\leq-2\sqrt{r_0}$, and $r_0+r_1\geq0$ belong to $\mathcal{D}_2^{(*)}$. In the following, we deal with the remaining part of this quadrant up to some finite region.

Approaching the critical line segment $r_0=1, -2\le r_1\le 2$, one can find points ${\bf r}=(r_0,r_1)$ such that $\tau_{\bf r}$ is expanding, but has arbitrarily long cycles. Indeed, Akiyama and Peth\H{o} \cite{Akiyama-Pethoe-:12} proved that the mapping $\tau_{\bf r}$ has infinitely many cycles for ${\bf r}=(1,r_1)$ with arbitrary $-2 < r_1 <2$. Let $r_1$ be irrational, and $(a_0,\dots,a_{p-1})$ be a cycle of $\tau_{\bf r}$. Consequently, there exists a $0<\delta$ such that $\delta< a_{k-1}+r_1a_k +a_{k+1}< 1-\delta$ holds for all $k=1,\dots,p$. Choosing a small enough $\varepsilon>0$, we get $0\le (1+\varepsilon) a_{k-1}+r_1a_k +a_{k+1}< 1$, i.e., $(a_0,\dots,a_{p-1})$ is a non-trivial periodic orbit of $\tau_{(1+\varepsilon,r_1)}$ as well.

Weitzer \cite{Weitzer2015a} defined six infinite sequences of polygons which cover the critical line. Each SRS associated to points in these polygons has cycles. Moreover, most of these polygons have points not only on and to the left, but also to the right of the critical line.
\begin{figure}[H]
\centering
\begin{overpic}[height=7.7cm]{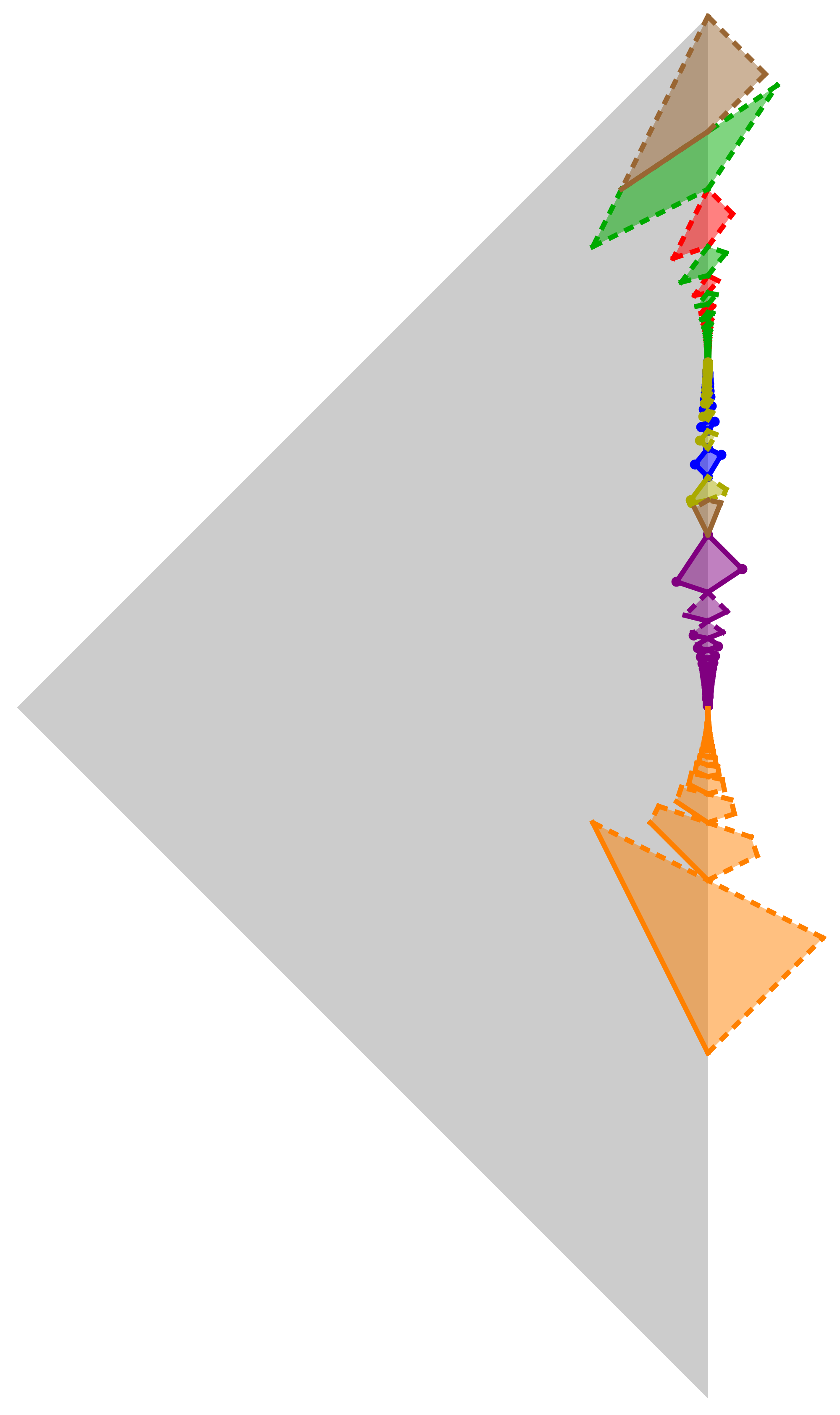}
\put(-14.00,48.60){$(-1,0)$}
\put(51.50,0.00){$(1,-2)$}
\put(51.50,24.00){$(1,-1)$}
\put(51.50,48.60){$(1,0)$}
\put(51.50,74.20){$(1,1)$}
\put(51.50,99.00){$(1,2)$}
\end{overpic}
\caption{Six families of cutout polygons covering the critical line $r_0=1$ almost everywhere for $-1\leq r_1\leq 2$.}
\end{figure}

In the following, we prove that points with this property belong to a bounded region.

\begin{theorem}
\[
\Big\{
(r_0,r_1) \in \mathbb{R}^2\mid r_0-r_1 > -\frac12,\,  r_1\ge\max\{-2\sqrt{r_0},-r_0\},  \,   r_0>\frac{3}{2} + \sqrt{2}
\Big\}
\subset \mathcal{D}_2^{(*)}.
\]

\end{theorem}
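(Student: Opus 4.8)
The plan is to argue by contradiction: assume $\tau_\mathbf{r}$ has a nontrivial cycle, which we may take to be purely periodic, say $(a_0,\dots,a_{p-1})$, and to show that the three hypotheses force every cycle element into $\{-1,0,1\}$, then into $\{-1,1\}$, after which a short combinatorial analysis removes all remaining possibilities. First I would record that throughout the region both roots $\alpha_1,\alpha_2$ of $x^2+r_1x+r_0$ have modulus strictly larger than $1$: their moduli multiply to $r_0>1$, and since $p(1)=1+r_0+r_1\ge 1>0$, $p(-1)=r_0-r_1+1>\tfrac12>0$, and $r_1\ge\max\{-2\sqrt{r_0},-r_0\}$, no root can lie in $[-1,1]$ (the only real roots occurring here are a conjugate-modulus pair with $|\alpha_1|=|\alpha_2|=\sqrt{r_0}$, or two real roots that are both $<-1$). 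In particular $|\alpha_1|,|\alpha_2|\neq 1$, so Proposition~\ref{Prop25} applies and yields $|a_n|\le \frac{1}{(|\alpha_1|-1)(|\alpha_2|-1)}$ for every element of the cycle.

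The first key step is to show this bound is strictly below $2$, and I would split according to the nature of the roots. If the two roots have equal modulus (non-real conjugates, or a real double root) the product is $(\sqrt{r_0}-1)^2$, and the hypothesis $r_0>\tfrac32+\sqrt2$ is exactly equivalent to $(\sqrt{r_0}-1)^2>\tfrac12$. If the roots are real and distinct, they are both $<-1$, so $|\alpha_i|-1=-(\alpha_i+1)$ and the product equals $(\alpha_1+1)(\alpha_2+1)=p(-1)=r_0-r_1+1$, which exceeds $\tfrac12$ precisely because $r_0-r_1>-\tfrac12$. Hence in all cases $|a_n|<2$, i.e. $a_n\in\{-1,0,1\}$; note that the two threshold constants in the statement enter here, one controlling the conjugate case and the other the real case.

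Next I would eliminate zeros. Putting $a_{n+1}=0$ and $a_n=\pm1$ into $0\le a_nr_0+a_{n+1}r_1+a_{n+2}<1$ from \eqref{Eqrek} forces $a_{n+2}\in[-r_0,1-r_0)$ or $a_{n+2}\in[r_0,1+r_0)$; since $r_0>2$ this contradicts $|a_{n+2}|\le 1$. Thus $a_{n+1}=0\Rightarrow a_n=0$, and backward propagation shows that a single zero makes the whole cycle vanish. So a nontrivial cycle has all entries in $\{-1,1\}$. Two such cycles are already excluded: by Lemma~\ref{LSameSign} there is no constant-sign cycle, because $r_0+r_1\ge 0$; and by Lemma~\ref{LAltSign} there is no alternating cycle, because $r_0-r_1+1>\tfrac12$ contradicts $|r_0-r_1+1|<\tfrac12$.

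For the surviving mixed $\pm1$ cycles I would, for each triple $(a,b,c)\in\{-1,1\}^3$, record the interval of parameters for which $0\le ar_0+br_1+c<1$. Within the region the triples $(1,1,1)$, $(-1,1,-1)$, $(-1,-1,-1)$ are impossible, while the five admissible triples are exactly the edges of a de~Bruijn graph on the states $\{(1,1),(1,-1),(-1,1),(-1,-1)\}$; this graph has precisely two simple directed cycles, namely the periodic words $(1,1,-1)$ and $(1,1,-1,-1)$. The first needs both $(1,-1,1)$ and $(-1,1,1)$, forcing $r_0-r_1<0$ and $r_0-r_1>0$ at once; the second needs $(1,1,-1),(-1,-1,1)$ and $(1,-1,-1),(-1,1,1)$, forcing $r_0+r_1=1$ and $r_0-r_1=1$, i.e. $r_0=1$. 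Both are incompatible with the hypotheses, so the realized subgraph contains no closed walk and the assumed cycle cannot exist, giving $\mathbf{r}\in\Dstar{2}$. The main obstacle is exactly this last step: one must verify the admissibility intervals and carry out the cycle enumeration exhaustively, since an overlooked admissible word is the only place the argument could fail; by contrast the size estimate of the second paragraph is a clean computation once the root cases are separated.
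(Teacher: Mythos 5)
Your proposal is correct and follows essentially the same route as the paper: both bound the cycle elements via the product $\prod\abs{\abs{\alpha_j}-1}$ (Corollary~\ref{Cor23}/Proposition~\ref{Prop25}), split into the real-root case (where the product equals $r_0-r_1+1>\frac12$) and the equal-modulus case (where it equals $(\sqrt{r_0}-1)^2>\frac12$, exactly the condition $r_0>\frac32+\sqrt2$), and conclude $\abs{a_n}<2$. The only difference is that the paper dismisses the exclusion of cycles over $\set{-1,0,1}$ as ``a simple computation,'' whereas you carry it out explicitly (zero elimination, Lemmas~\ref{LSameSign} and~\ref{LAltSign}, and the enumeration of admissible triples and their two incompatible simple cycles), which is a faithful and correct completion of that step.
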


\begin{proof}
Let $\mathbf{r}=(r_0,r_1)$ be an element of the set specified in the statement of the theorem.
We first deal with the case where the polynomial $P(x)=x^2+r_1x +r_0$ has two real roots $\alpha_1$ and $\alpha_2$. Then, $\alpha_2\le\alpha_1<-1$. Assume that $\tau_{\mathbf{r}}$ admits the cycle $(a_n)$. We have
$$
|a_n| \le \frac{1}{||\alpha_1|-1| ||\alpha_2|-1|} = \frac{1}{(\alpha_1+1)(\alpha_2+1)} = \frac{1}{r_0-r_1+1}< 2
$$
by Corollary~\ref{Cor23}. Thus, $\tau_{\mathbf{r}}$ only admits cycles consisting of elements taken from the set $\{-1,0,1\}$. A simple computation shows that this is impossible.

Now we proceed with the case where $P(x)$ has a pair of complex conjugate roots, i.e., $\alpha_2 = \bar{\alpha}_1$. Then, $|\alpha_1|=|\alpha_2|=\sqrt{r_0}$ and, using Corollary~\ref{Cor23} again, we obtain
$$
|a_n| \le \frac{1}{||\alpha_1|-1| ||\alpha_2|-1|} = \frac{1}{r_0-2 \sqrt{r_0}+1} = \frac{1}{(\sqrt{r_0}-1)^2}.
$$
Since $r_0> \frac32 + \sqrt{2}$ is equivalent to $\sqrt{r_0}-1 >  \frac{1}{\sqrt{2}}$, $|a_n|<2$, which, similarly to the case we considered above, also leads to a contradiction.
(Note that the line $r_1= r_0+\frac12$ intersects the parabola $r_1^2 = 4r_0$ in the points $(\frac32 \pm \sqrt{2},2 \pm \sqrt{2})$.)
\end{proof}

With more effort one can improve the last theorem, but a complete characterization of parameters without non-trivial periodic points is, in spite of the results of Weitzer \cite{Weitzer2015a}, and of Akiyama and Peth\H{o} \cite{Akiyama-Pethoe-:12} mentioned above, a very hard problem.

\medskip
It remains one more infinite region: the points enclosed between the lines $r_0-r_1=-1$ and $r_0-r_1=-\frac12$ over the parabola $r_1^2=4r_0$. As a consequence of our last theorem in this section, only a bounded part of it may have points with associated SRS having non-trivial cycles.

\begin{theorem}
\[
\Big\{
(r_0,r_1) \in \mathbb{R}^2\mid -1<r_0-r_1\le -\frac12,\, r_1\ge 1+\sqrt{5},\, r_1^2\ge 4r_0
\Big\}
\subset \mathcal{D}_2^{(*)}.
\]
\end{theorem}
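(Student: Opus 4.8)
The plan is to argue by contradiction, exploiting that the region forces a very rigid root configuration rather than trying to bound the cycle elements into a small finite set. First I would record the geometry of the roots. Since $r_1^2\ge 4r_0$, the polynomial $x^2+r_1x+r_0$ has two real roots, and from $r_0-r_1>-1$ together with $r_1\ge 1+\sqrt5>2$ one checks that $r_0>\sqrt5$ and that both roots are negative and strictly below $-1$; write them as $\alpha_2\le\alpha_1<-1$ and put $\rho_i=|\alpha_i|=-\alpha_i$, so that $(\rho_1-1)(\rho_2-1)=r_0-r_1+1>0$. Assuming $\mathbf r\notin\mathcal{D}_2^{(*)}$, the map $\tau_{\mathbf r}$ has a nontrivial cycle; by Corollary~\ref{Cor23} it is (after discarding a preperiod) purely periodic and every cycle element obeys
\[
|a_n|\le\frac{1}{(\rho_1-1)(\rho_2-1)}=\frac{1}{r_0-r_1+1}.
\]

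Second, I would establish that consecutive cycle elements have strictly opposite signs. The quantitative input is that $r_1\ge 1+\sqrt5$ yields $\rho_2=\tfrac12\bigl(r_1+\sqrt{r_1^2-4r_0}\bigr)\ge \tfrac{r_1}2\ge\tfrac{1+\sqrt5}2$, which is exactly the inequality $\rho_2^2-\rho_2-1\ge0$, i.e. $\frac{-\alpha_2}{\alpha_2^2-1}\le 1$. Feeding this into the sharp bound \eqref{eqProp264} of Proposition~\ref{Prop26} for $b_n=a_{n+1}-\alpha_1a_n$ shows that $a_n\ge1$ forces $a_{n+1}<1-\rho_1\le0$, hence $a_{n+1}\le-1$, and symmetrically $a_n\le-1$ forces $a_{n+1}\ge1$; the same estimate rules out $a_n=0$ in a nontrivial cycle. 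Thus the signs alternate and the period is even.

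The crux is a rigidity argument forcing the cycle to have constant absolute value. Let $A=\max_n|a_n|\ge1$ and pick $m$ with $|a_m|=A$. Using the defining relation \eqref{Eqrek} in the form $e_{m-1}=a_{m+1}+r_1a_m+r_0a_{m-1}\in[0,1)$, the alternation $\Sgn{a_{m+1}}=\Sgn{a_{m-1}}=-\Sgn{a_m}$, and $|a_{m+1}|\le A$, I would solve for the predecessor; taking $a_m=A>0$ (the sign $a_m<0$ being analogous and slightly easier) this gives
\[
r_0|a_{m-1}|\ge (r_1-1)A-e_{m-1}>(r_1-1)A-1.
\]
Writing $r_0=(r_1-1)+(r_0-r_1+1)$ and using $(r_0-r_1+1)A\le1$ from the bound above, this rearranges to
\[
|a_{m-1}|>A-\frac{(r_0-r_1+1)A+1}{r_0}\ge A-\frac{2}{r_0}>A-1,
\]
the last step using $r_0>\sqrt5>2$. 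Since $|a_{m-1}|\le A$ is an integer, $|a_{m-1}|=A$. Hence the predecessor of any maximal element is again maximal, so $|a_n|=A$ for all $n$. Combined with strict alternation this makes the cycle equal to $(A,-A)$, and Lemma~\ref{LAltSign} then forces $r_0-r_1+1=0$, contradicting $r_0-r_1>-1$.

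Finally, a remark on where the difficulty lies. In contrast to the preceding theorem, here $r_0-r_1+1$ may be arbitrarily small, so Corollary~\ref{Cor23} does \emph{not} confine the cycle to $\{-1,0,1\}$ and one cannot conclude by checking finitely many short cycles — the elements could a priori be large. The step that circumvents this is the rigidity propagation, which replaces a (missing) good absolute bound by the relative statement that the maximum propagates backward around the cycle. The hypothesis $r_1\ge 1+\sqrt5$ enters precisely to guarantee the strict sign alternation on which that step depends, while the cheap consequence $r_0>2$ is what upgrades the final estimate from an inequality to an integer equality. I expect the two points needing genuine care to be the correct exploitation of the asymmetric bound \eqref{eqProp264} to get alternation, and the predecessor estimate, since everything hinges on the interplay of $(r_0-r_1+1)A\le1$ with $r_0>2$.
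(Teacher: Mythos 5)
Your proof is correct, and it matches the paper's proof at both ends but diverges in the middle step, which is genuinely different. Like the paper, you feed the golden-ratio consequence $\alpha_2\le-\frac{1+\sqrt{5}}{2}$ of $r_1\ge 1+\sqrt{5}$ (equivalently $\frac{-\alpha_2}{\alpha_2^2-1}\le 1$, together with $\frac{-1}{\alpha_2^2-1}>-1$) into inequality \eqref{eqProp264} of Proposition~\ref{Prop26}, and like the paper you finish by forcing the cycle into the form $(t,-t)$ and invoking Lemma~\ref{LAltSign}, which contradicts $r_0-r_1+1>0$. In between, however, the paper rewrites $-1<a_{n+1}-\alpha_1a_n<1$ as $(\alpha_1+1)a_n-1<a_{n+1}+a_n<(\alpha_1+1)a_n+1$ and reads off by integrality that $a_{n+1}\ge -a_n$ when $a_n<0$ and $a_{n+1}\le -a_n$ when $a_n>0$, so $(|a_n|)$ is monotonically increasing and periodicity forces it to be constant --- no a priori size bound on the cycle is ever used. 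You instead extract only the sign alternation from the same inequality and recover constancy of $|a_n|$ through your maximum-rigidity argument: with $|a_m|=A$ maximal (say $a_m=A$), the relation $e_{m-1}=a_{m+1}+r_1a_m+r_0a_{m-1}\in[0,1)$ combined with $a_{m+1}\ge -A$ and the bound $(r_0-r_1+1)A\le 1$ from Corollary~\ref{Cor23} gives $r_0|a_{m-1}|>(r_1-1)A-1\ge r_0A-2$, hence $|a_{m-1}|>A-\frac{2}{r_0}>A-1$ since $r_0>r_1-1\ge\sqrt{5}>2$, so $|a_{m-1}|=A$ by integrality, and backward propagation around the cycle yields $(A,-A)$. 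I checked the details (including the strictness coming from $e_{m-1}<1$ and the easier case $a_m=-A$) and they are sound. The trade-off: your route requires two extra inputs, Corollary~\ref{Cor23} and the numerical fact $r_0>2$, both of which the paper's one-line monotonicity observation makes unnecessary; conversely, your backward-propagation trick is more robust in settings where one has only alternation plus an a priori bound rather than monotonicity, and your closing diagnosis is accurate --- here $r_0-r_1+1\in(0,\frac{1}{2}]$ can be arbitrarily close to $0$, so Corollary~\ref{Cor23} alone cannot confine the cycle to $\{-1,0,1\}$ as it does in the preceding theorem.
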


\begin{proof}
Let $\mathbf{r}=(r_0,r_1)$ be an element of the set specified above and assume that $(a_n)$ is a non-trivial periodic sequence.
Given the assumptions, it follows that the roots $\alpha_1,\alpha_2$ of $x^2+r_1x +r_0$ are real and satisfy $\alpha_2\le\alpha_1<-1$. Since $\alpha_1+\alpha_2 =-r_1\le -(1+\sqrt{5})$, we have $\alpha_2\le -\frac{1+\sqrt{5}}{2}$. Thus, we obtain
$$
\frac{-1}{\alpha_2^2-1} <a_{n+1}-\alpha_1 a_n < \frac{-\alpha_2}{\alpha_2^2-1}
$$
from \eqref{eqProp264}.

The function $\frac{-x}{x^2-1}$ is monotonically increasing in $(-\infty,-\frac{1+\sqrt{5}}{2}]$ and its maximum $1$ is located at $x=-\frac{1+\sqrt{5}}{2}$. This implies that  $\frac{-1}{\alpha_2^2-1}\ge \frac{1}{\alpha_2} \ge -\frac{2}{1+\sqrt{5}}= \frac{1-\sqrt{5}}{2}$, which leads to
$$
-1 < a_{n+1}-\alpha_1 a_n < 1.
$$

This inequality implies $a_n\not= 0$ for all $n$, and consecutive members of the sequence $(a_n)$ must have different signs. Moreover, we can write it in the form
$$
(\alpha_1+1)a_n -1 < a_{n+1}+a_n < (\alpha_1+1)a_n +1.
$$

 If $a_n<0$, then $(\alpha_1+1) a_n>0$ and $a_{n+1}+a_n\ge 0$, i.e., $a_{n+1}\ge -a_n$.

 If $a_n>0$, then $(\alpha_1+1) a_n<0$ and $a_{n+1}+a_n\le 0$, i.e., $a_{n+1}\le -a_n$. Hence, the sequence $(|a_n|)$ is monotonically increasing and it can be periodic only if it is constant, i.e., if $(a_n) = (a,-a,a,-a,\ldots)$  for some $a\in \Z$. However, this is excluded as a consequence of Lemma \ref{LAltSign}, which leads to the desired contradiction.
\end{proof}

\subsection{Subregions of $\mathcal{D}_2^{(*)}$: algorithmic approaches}
$ $\\
In the previous sections we have characterized $\Dstar{2}$ up to a bounded region which we denote by $\mathcal{R}\subseteq\R^2$. $\mathcal{R}\cap\operatorname{int}(\D{2})$ has been characterized in large parts in \cite{Weitzer2015a} by two algorithms which can be adapted for those parts of the exterior of $\D{2}$, for which the corresponding SRS is expanding. This leads to the following result:

\begin{theorem}
\label{TAlgorithm}
If $\mathcal{R}\subseteq\R^2$ denotes the region not covered by any of the theorems above, then $\mathcal{R}\cap\set{(x,y)\in\R^2\mid x\geq\frac{4}{3}}$ is contained in $\Dstar{2}$.
\end{theorem}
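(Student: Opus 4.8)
The theorem states that the remaining uncertain region $\mathcal{R}$, intersected with the half-plane $r_0 \geq 4/3$, lies in $\mathcal{D}_2^{(*)}$. By the earlier theorems, $\mathcal{R}$ is already bounded, so I am really being asked to handle a bounded region of the parameter plane where all the easy monotonicity and size arguments have failed. The key phrase in the paper is that this is an *algorithmic approach* adapting two algorithms from Weitzer's work. So my plan should be computational/finite-verification in spirit rather than a clean closed-form estimate.

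Let me think carefully about what's actually going on here.

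**What I know.** From Proposition 2.5 (Prop25), for any parameter $\mathbf{r}$ with $|\alpha_1|, |\alpha_2| \neq 1$, every element of a cycle satisfies
$$
|a_n| \le \frac{1}{||\alpha_1|-1|\,||\alpha_2|-1|}.
$$
This bound blows up as $|\alpha_i| \to 1$, i.e., as we approach the critical line $r_0 = 1$. The constraint $r_0 \geq 4/3$ keeps us bounded away from the worst part of the critical line, so the cycle-element bound is controlled — but not necessarily below $1$ or $2$, so the trivial counting arguments (as in the previous two theorems, where $|a_n| < 2$) won't immediately suffice. That's precisely why an algorithm is needed.

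**The core idea: reduce to a finite search.** In the region $\mathcal{R} \cap \{r_0 \geq 4/3\}$, the parameter $\mathbf{r}$ ranges over a bounded set, and by Prop25 any cycle consists of integers bounded in absolute value by some explicit constant $M$ (obtained by maximizing $\frac{1}{||\alpha_1|-1||\alpha_2|-1|}$ over the region). So a priori each cycle lives in the finite grid $\{-M,\dots,M\}^2 \subset \mathbb{Z}^2$ of state pairs $(a_n, a_{n+1})$. The obstruction is that $\mathbf{r}$ still ranges over a *continuum*, so I cannot simply run $\tau_\mathbf{r}$ for each of finitely many parameters.

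**Weitzer's trick.** The resolution is parameter-interval arithmetic. Instead of fixing $\mathbf{r}$, one works with a region (box) of parameters at once and tracks, for each candidate state $(a_1, a_2)$, the *set* of possible successor states $-\lfloor r_0 a_1 + r_1 a_2\rfloor$ as $\mathbf{r}$ ranges over the box. This is a finite (set-valued) directed graph on the bounded state set. One then subdivides the parameter region into small enough boxes so that the set-valued successor relation becomes sharp enough to detect that no nontrivial cycle can close up.

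Here's how I'd organize the argument.

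\begin{itemize}
\item[(1)] \emph{Derive an explicit element bound.} On the compact set $\mathcal{R} \cap \{r_0 \geq 4/3\}$, the quantity $\frac{1}{||\alpha_1|-1|\,||\alpha_2|-1|}$ is bounded by an explicit constant $M$ (computed in terms of $r_0 \geq 4/3$ and the defining inequalities of $\mathcal{R}$). Hence, by Proposition~\ref{Prop25}, every cycle of every $\tau_\mathbf{r}$ with $\mathbf{r}$ in this region is contained in the finite set $W := \set{-M,\dots,M}^2 \subseteq \mathbb{Z}^2$ of consecutive-state pairs.
\item[(2)] \emph{Set up the set-valued transition graph over a parameter box.} For a parameter box $B \subseteq \mathbb{R}^2$, define for each $(a_1,a_2) \in W$ the set of admissible successors $\Phi_B(a_1,a_2) := \set{(a_2, a_3) \mid a_3 = -\floor{r_0 a_1 + r_1 a_2} \text{ for some } (r_0,r_1)\in B}$. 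Since $r_0 a_1 + r_1 a_2$ is linear in $(r_0, r_1)$, its range over $B$ is an interval, so $\Phi_B(a_1,a_2)$ is a small explicitly computable set of states. This yields a finite directed graph $G_B$ on vertex set $W$.
\item[(3)] \emph{Detect nontrivial cycles combinatorially.} Any genuine nontrivial cycle of $\tau_\mathbf{r}$ for $\mathbf{r} \in B$ must trace out a directed cycle in $G_B$ through some nonzero vertex. So it suffices to show $G_B$ has no nontrivial directed cycle. When a box $B$ is small, the successor sets shrink and most spurious edges disappear.
\item[(4)] \emph{Cover $\mathcal{R} \cap \set{r_0 \geq 4/3}$ by finitely many boxes.} Adaptively subdivide: whenever $G_B$ still contains a candidate nontrivial cycle, split $B$ into subboxes and recurse. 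The termination argument is the crux — see below. Since the region is compact, the procedure terminates after finitely many subdivisions, each certifying its box is cycle-free, and their union covers the whole region.
\end{itemize}

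\textbf{The main obstacle.} The hard part is guaranteeing that the subdivision \emph{terminates} — that is, that for every parameter $\mathbf{r}$ in the region, some sufficiently small box around $\mathbf{r}$ produces a graph $G_B$ with no nontrivial cycle. This is exactly where the monotonicity-type estimates of Proposition~\ref{Prop26} and the structure of the invariant sets $S_0,\dots,S_6$ re-enter: one needs to show that for a fixed $\mathbf{r}$ in $\mathcal{R} \cap \set{r_0 \ge 4/3}$, the (single-valued) map $\tau_\mathbf{r}$ admits no nontrivial cycle, \emph{and} that this fact is stable under small perturbation of $\mathbf{r}$, i.e., the orbit leaves $W$ or returns to $\mathbf{0}$ in a uniformly bounded number of steps. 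Given such a uniform escape time, shrinking $B$ below the corresponding resolution kills all spurious cycles in $G_B$, and the compactness of the region then bounds the total number of boxes. The delicate points are (a) proving the pointwise no-cycle statement on the whole region $r_0 \ge 4/3$ (which may itself require the earlier sign/monotonicity lemmas applied region by region), and (b) confirming the numerical bound $M$ is small enough that $W$ is genuinely finite and the graph computations complete — this is where the explicit constant $4/3$ enters, ensuring $|\alpha_i| - 1$ stays bounded away from $0$ uniformly. I would expect the verification to be carried out by a computer implementation of the interval arithmetic described in Weitzer's two algorithms, with the present proof reducing to checking that the hypotheses of those algorithms hold on $\mathcal{R} \cap \set{r_0 \ge 4/3}$.
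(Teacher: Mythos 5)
Your overall strategy---confine all cycles to a finite witness set, then tame the parameter continuum by a finite decomposition certified by computer---is the same in outline as the paper's, but both of your key ingredients deviate from the paper's proof, and the first deviation is a genuine gap. You build the witness set from the cycle bound of Proposition~\ref{Prop25} and assert in step (1) that a single explicit constant $M$ works on all of $\mathcal{R}\cap\set{(x,y)\in\R^2\mid x\geq\frac{4}{3}}$. No such uniform $M$ exists: this region contains points arbitrarily close to the line $r_1=r_0+1$ (for $\frac43\le r_0<\sqrt{5}$ the strip $-1<r_0-r_1\le-\frac12$ is not reached by the earlier theorems, since there $r_1<1+\sqrt{5}$), and as $r_1\to r_0+1$ one root of $x^2+r_1x+r_0$ tends to $-1$, so $\frac{1}{\abs{\abs{\alpha_1}-1}\,\abs{\abs{\alpha_2}-1}}\to\infty$ and your grid $\set{-M,\ldots,M}^2$ cannot be chosen once and for all; the region is in particular not compact, contrary to what your step (4) assumes. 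The paper avoids any global bound: it invokes the theorem of Lagarias and Wang \cite{LagariasWang:1996} to obtain, for each strictly expanding $\vector{r}$ and suitable $\rho>1$, a norm with $\norm{R(\vector{r})\vector{x}}_{\vector{r},\rho}>\rho\norm{\vector{x}}_{\vector{r},\rho}$, whence every cycle lies in the \emph{parameter-local} witness set $W_{\vector{r},\rho}$, and it openly concedes (in the parenthetical about the number of parts tending to infinity) that the construction degenerates as expansivity is lost.

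Second, your step (4)---adaptive box subdivision with set-valued successor graphs---leaves unproven exactly what you yourself call the crux, termination, and your sketched repair (a uniform escape time derived from the pointwise no-cycle statement) is circular as a proof of the theorem, since the pointwise statement is what is to be shown. The paper's mechanism here is genuinely different and closes this hole at the local level: on a bounded neighborhood $K$ of $\vector{r}$ consisting of strictly expanding parameters it declares $\vector{s}\sim\vector{t}$ iff $\tau_\vector{s}$ and $\tau_\vector{t}$ agree on the finite set $W_{\vector{r},\rho}$; because $\vector{s}\mapsto\floor{\vector{s}\vector{x}}$ takes only finitely many values on $K$ for each of the finitely many witnesses $\vector{x}$, the quotient $K/\!\sim$ is finite, each class is by construction entirely inside or entirely outside $\Dstar{2}$, and one representative per class is checked by the single-parameter algorithm. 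So where you must subdivide until an unquantified event occurs, the paper obtains an a priori finite decomposition with single-valued transitions on each piece. To repair your write-up, you should replace the global $M$ by the norm-based local witness sets and adopt the equivalence-class argument (or, at minimum, restrict your uniform-$M$ scheme to compact subsets bounded away from the segment $r_1=r_0+1$, $\frac43\le r_0\le\sqrt{5}$, and give a separate, quantified treatment near that segment).
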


\begin{proof}
We outline the idea behind the adapted version of one of the algorithms in \cite{Weitzer2015a}. We need the algorithms to work for parameters of SRS which are expanding instead of contracting. The first ingredient is a result by Lagarias and Wang \cite{LagariasWang:1996} which states that for any $\vector{r}\in\R^2$, for which
\[
R(\mathbf{r})=
\begin{pmatrix}
0 & 1\\
-r_0 & -r_1
\end{pmatrix}
\]
is expanding, and any $1<\rho<\min\set{\abs{\lambda}\mid\lambda\text{ eigenvalue of } R(\vector{r})}$, there is a norm $\norm{\cdot}_{\vector{r},\rho}$ on $\R^2$ such that $\norm{R(\vector{r})\vector{x}}_{\vector{r},\rho}>\rho\norm{\vector{x}}_{\vector{r},\rho}$ for all $\vector{x}\in\Z^2$. If $\norm{\vector{x}}_{\vector{r},\rho}>\frac{\norm{(0,\ldots,0,1)}_{\vector{r},\rho}}{\rho-1}$, we thus get
\begin{align*}
\norm{\tau_\vector{r}(\vector{x})}_{\vector{r},\rho}\geq\norm{R(\vector{r})\vector{x}}_{\vector{r},\rho}-\norm{(0,\ldots,0,1)}_{\vector{r},\rho}>\norm{\vector{x}}_{\vector{r},\rho}.
\end{align*}
Hence, we can restrict our search for possible cycles of $\tau_\vector{r}$ to the finite set of witnesses
\begin{align*}
W_{\vector{r},\rho}\ce\set{\vector{x}\in\Z^2\mid\norm{\vector{x}}_{\vector{r},\rho}\leq\frac{\norm{(0,\ldots,0,1)}_{\vector{r},\rho}}{\rho-1}},
\end{align*}
which is the basis of the single parameter version of the algorithm.

To settle entire convex regions of $\R^2$, we observe that the norm $\norm{\cdot}_{\vector{r},\rho}$ depends continuously on $\vector{r}$ and, thus,
\begin{align*}
\norm{\tau_\vector{s}(\vector{x})}_{\vector{r},\rho}>\norm{\vector{x}}_{\vector{r},\rho}
\end{align*}
also holds for all $\vector{x}\in\Z^2$ and all $\vector{s}\in\R^2$ sufficiently close to $\vector{r}$. Thus, there is a bounded set $K\subseteq\R^2$ of such $\vector{s}$ which contains $\vector{r}$ as an interior point, and which has a positive distance from the boundary of $\mathcal{D}_2$ (i.e., the SRS of all parameters in $K$ are strictly expanding). We consider the following equivalence relation on $K$:
\begin{align*}
\vector{s}\sim\vector{t}\Leftrightarrow\fa\vector{x}\in W_{\vector{r},\rho}:\tau_\vector{s}(\vector{x})=\tau_\vector{t}(\vector{x}).
\end{align*}
Since $K$ is bounded and has a positive distance form $\mathcal{D}_2$, it follows that $K\slash_\sim$ is a finite set and every element of $K\slash_\sim$ is either contained in $\Dstar{2}$ or has an empty intersection with it by construction. Each of the finitely many parts (whose number, however, tends to infinity upon decreasing the distance to $\mathcal{D}_2$) can, thus, be settled by the single parameter version of the algorithm by taking an arbitrary parameter in the respective part.
\end{proof}

\section*{Acknowledgment}
We are very indebted to the anonymous referee for the careful reading of our manuscript and for the suggestions, which improved the quality of the presentation.

\bibliographystyle{siam}

\bibliography{D2Star}
\end{document}